\newtheorem{theorem}{Theorem}[section]
\newtheorem{lemma}{Lemma}[section]
\newtheorem{remark}{Remark}[section]
\numberwithin{equation}{section} 
\newcommand{\normmm}[1]{{\left\vert\kern-0.25ex\left\vert\kern-0.25ex\left\vert #1
    \right\vert\kern-0.25ex\right\vert\kern-0.25ex\right\vert}}
\def\ds{{\rm d}s}
\def\div{{\rm div}}
\def\d{{\rm d}}
\begin{document}
\title{Finite Volume Element Methods for Two-Dimensional Time Fractional Reaction-Diffusion Equations on Triangular Grids}
\date{ }
\author{Zhichao Fang$^{a,\dag}$, Jie Zhao$^{a,b}$, Hong Li$^{a}$, Yang Liu$^{a}$
\\ \small{\emph{$^a$ School of Mathematical Sciences, Inner Mongolia University, Hohhot 010021, China;}}
\\ \small{\emph{$^b$ School of Statistics and Mathematics, Inner Mongolia University of Finance and Economics,}}
\\ \small{\emph{Hohhot, 010070, China}}
}
\date{}
\maketitle
\footnote{\dag Corresponding author.}
\footnote{Email: zcfang@imu.edu.cn, mazcfang@aliyun.com}
\footnote{Original manuscript submitted to the Journal on October 15, 2019}

{\color{black}\noindent\rule[0.5\baselineskip]{\textwidth}{0.5pt} }
\noindent \textbf{Abstract:}
In this paper, the time fractional reaction-diffusion equations with the Caputo fractional derivative are solved
by using the classical $L1$-formula and the finite volume element (FVE) methods on triangular grids.
The existence and uniqueness for the fully discrete FVE scheme are given.
The stability result and optimal \textit{a priori} error estimate in $L^2(\Omega)$-norm are derived,
but it is difficult to obtain the corresponding results in $H^1(\Omega)$-norm,
so another analysis technique is introduced and used to achieve our goal.
Finally, two numerical examples in different spatial dimensions are given to verify the feasibility and effectiveness.
\\
\noindent\textbf{Keywords:} {time fractional reaction-diffusion equation; $L1$-formula; finite volume element method;
stability analysis; \textit{a priori} error estimate}
\\
 {\color{black}\noindent\rule[0.5\baselineskip]{\textwidth}{0.5pt} }
\def\REF#1{\par\hangindent\parindent\indent\llap{#1\enspace}\ignorespaces}
\newcommand{\h}{\hspace{1.cm}}
\newcommand{\hh}{\hspace{2.cm}}
\normalsize \vskip 0.2in

\section{Introduction}

In this article, we consider the following time fractional reaction-diffusion equations with the Caputo fractional derivative
\begin{align}\label{1.1}
   \left\{
   \begin{array}{ll}
        {}_0^CD_t^{\alpha}u(\bm{x},t)-\div(\mathcal{A}(\bm{x})\nabla u(\bm{x},t))+q(\bm{x})u(\bm{x},t)
        =f(\bm{x},t),& (\bm{x},t)\in\Omega\times J,\\
        u(\bm{x},t)=0, &  (\bm{x},t)\in\partial\Omega\times \bar{J}, \\
        u(\bm{x},0)=u_0(\bm{x}),& \bm{x}\in\bar{\Omega},
   \end{array}\right.
\end{align}
where $\Omega\subset R^2$ is a bounded convex polygonal domain,
$\partial\Omega$ is the corresponding boundary, $J=(0,T]$ is the time interval with $0<T<\infty$.
The source function $f(\bm{x},t)$, reaction term coefficient $q(\bm{x})$ and initial data $u_0(\bm{x})$
are smooth enough, and $q(\bm{x})\geq 0$, $\forall \bm{x}\in \bar{\Omega}$.
Moreover, we assume that the diffusion coefficient $\mathcal{A}(\bm{x})=\{a_{i,j}(\bm{x})\}_{2\times2}$ is a sufficiently smooth matrix function, which satisfies symmetric and uniformly positive definite, that is, there exists a
constant $\beta_0>0$ such that
\begin{align*}
   \bm{\xi}^T\mathcal{A}(\bm{x})\bm{\xi}\geq \beta_0\bm{\xi}^T\bm{\xi},\ \forall \bm{\xi}\in R^2,\ \forall \bm{x}\in \bar{\Omega}.
\end{align*}
In \eqref{1.1}, ${}_0^CD_t^{\alpha}(u(\bm{x},t)$ is the Caputo fractional derivative defined by
\begin{align}\label{1.2}
        {}_0^CD_t^{\alpha}u(\bm{x},t)=\dfrac{1}{\Gamma (1-\alpha)}
        \int _{0}^{t} \dfrac{\partial u(\bm{x},s)}{\partial s}\dfrac{\ds}{(t-s)^\alpha}
        ,\ 0<\alpha <1.
\end{align}

Fractional partial differential equations (FPDEs) can describe
various natural phenomena in physics, chemistry and biology \cite{a1,a2,a4} and so on.
Especially when describing materials with some properties such as memory, heterogeneity or heredity, they often have very good results.
Unfortunately, it is difficult to get the exact solutions for most of the FPDEs via the analytical methods.
Thus, in past decades, a lot of numerical methods have been proposed to solve various FPDEs.
\par
For the time FPDEs with the Caputo fractional derivative,
the $L1$-formula which was firstly developed in \cite{Lin-Xu,Sun-Wu} is very useful and popular by many scholars.
Li et al. \cite{Ljch} developed finite element methods with $L1$-formula for solving fractional Maxwell's models.
Li et al. \cite{Licp1} designed a numerical approximation based on $L1$-formula for nonlinear fractional subdiffusion problem, and also discussed a superdiffusion model.
In \cite{Liufw1}, Feng et al. studied the finite element method with a unstructured mesh for the 2D fractional diffusion model with a time-space Riesz fractional derivative.
In \cite{Liufw2}, Feng et al. proposed the finite element method for a novel 2D mixed sub-diffusion and diffusion-wave equation with multi-term time-fractional derivative, in which the fractional derivative was approximated by $L1$-formula and other numerical formulas.
Jiang and Ma \cite{b1} constructed a fully discrete FE scheme based on the $L1$-formula to solve a class of FPDEs,
and gave the error analysis in $L^2$-norm.
Liu et al. \cite{b2} considered an $L1$-formula combined with two-grid mixed element algorithm for solving a nonlinear
fourth-order time fractional reaction-diffusion model.
Liu et al. \cite{b21} developed a finite difference/finite element method with $L1$-formula to treat a nonlinear
time fractional reaction-diffusion equation with a fourth-order derivative term.
Jin et al. \cite{b3} revisited the error analysis of the $L1$-formula,
especially for the nonsmooth initial data, the authors obtained $O(\tau)$ convergence rate.
Li et al. \cite{Li-Huang-Jiang} applied finite element methods based on $L1$-formula to study multi-term fractional diffusion equations,
obtained the unconditional stability and optimal error estimates,
and gave some numerical examples with higher spatial dimension.
Zhao et al. \cite{b4} constructed two conforming and nonconforming MFE schemes with $L1$-formula to solve time fractional diffusion equations,
and gave the stability and convergence results.
Li et al. \cite{b5} provided an FE method based on $L1$-formula to solve time fractional nonlinear parabolic equations,
and obtained optimal error estimates for several fully discrete linearized FE methods for nonlinear equations.
From the current literatures,
we find that there is no report about the finite volume element (FVE) method based on $L1$-formula for solving the FPDEs.
\par
The FVE methods \cite{Ewing-Lazarov-Lin,c1,c2,c3,c4,c5}, also known as box methods \cite{Bank} or generalized difference methods \cite{Li-Li,Li-Chen-Wu}, have been widely used in several fields of science and engineering.
This methods can preserve the local conservation laws for some physical quantities, which is very important in scientific computing.
and have attracted more and more scholars.
Recently, some scholars have done valuable works in solving FPDEs by using FVE methods.
Sayevand and Arjang \cite{d1} proposed a spatially semi-discrete FVE scheme
to solve the time fractional sub-diffusion problem $\frac{\partial^\alpha u}{\partial t^\alpha}=\beta\Delta u+f$ with the Caputo fractional derivative on a rectangular partition, gave some error estimates for the FE and FVE schemes.
Karaa et al. \cite{d2} constructed an FVE scheme for the fractional sub-diffusion equation
$u'+\mathscr{B}^\alpha\mathscr{L}u=f$ in a two-dimensional domain,
where $\mathscr{L}u=-\Delta u$ and $\mathscr{B}^\alpha$ is the Riemann-Liouville fractional derivative in time,
and the authors applied a piecewise linear discontinuous Galerkin method in time and an FVE method in space to construct a fully discrete scheme,
and gave the convergence analysis and numerical experiments.
Karaa and Pani \cite{d3} proposed an FVE scheme for fractional order evolution equations $u'+\partial_t^{1-\alpha}Au=0$,
where $Au=-\Delta u$ and $\partial_t^{1-\alpha}$ is the Riemann-Liouville fractional derivative.
In \cite{d3}, the authors gave the error analysis for the semi-discrete scheme with smooth, middly smooth and nonsmooth data,
and constructed and analyzed two fully discrete schemes by introducing convolution quadrature in time for smooth and nonsmooth initial data,
which were generated by the backward Euler and the second-order difference methods.
\par
In this article, we will construct a fully discrete FVE scheme
to treat the time fractional reaction-diffusion equation \eqref{1.1} on triangular grids
by using the $L1$-formula.
In spatial discretization, we first construct the primal and dual partitions,
select the \textit{trial} function space (based on primal partitions) and the \textit{test} function space (based on dual partitions),
then integrate the original equation \eqref{1.1} under the control volumes
to construct the FVE scheme by using the projection operator $I_h^*$.
In our theoretical analysis,
we give the existence and uniqueness for the fully discrete solution,
derive the stability results in $L^2(\Omega)$-norm and $H^1(\Omega)$-norm,
and obtain the optimal \textit{a priori} error estimates for $u$ in $L^2(\Omega)$-norm and $H^1(\Omega)$-norm.
Moreover, we provide two numerical examples in different spatial dimensions, and give some numerical results
to examine the feasibility and effectiveness of the fully discrete FVE scheme.
Here, because the bilinear $a(\cdot,I_h^*\cdot)$ does not necessarily satisfy symmetry,
it is difficult to obtain the stability and the optimal error result in $H^1(\Omega)$-norm,
so we give another analysis technique to achieve our goal.
\par
The layout of this paper is as follows.
By introducing the operator $I_h^*$ and the $L1$-formula of approximating the Caputo fractional derivative.
a fully discrete FVE scheme for the time fractional reaction-diffusion equation is proposed in Section 2.
In Section 3, we give the truncation errors of $L1$-formula and some properties of the operator $I_h^*$,
and give some important lemma which will be used in theoretical analysis.
In Sections 4 and 5, we give the theoretical analysis for the FVE scheme in detailed,
including the existence, uniqueness, stability and error estimates.
In Section 6, we give two numerical examples in different spatial dimensional to verify the feasibility and effectiveness.

\section{Fully Discrete FVE Scheme}

We use some general definitions and notations of the Sobolev spaces as in Reference \cite{Adams}.
Let $W^{m,p}(\Omega)$ ($m\geq 0$ and $1\leq p\leq \infty$) be the usual Sobolev space defined in $\Omega$
with the norm $\|\cdot\|_{W^{m,p}}$ (abbreviated as $\|\cdot\|_{m,p}$).
When $p=2$, we denote $W^{m,2}(\Omega)$ by $H^m(\Omega)$ with the norm $\|\cdot\|_{H^m(\Omega)}$ (abbreviated as $\|\cdot\|_m$), and denote $H^0(\Omega)$ by $L^2(\Omega)$ with the inner product $(\cdot,\cdot)$ and the norm $\|\cdot\|_{L^2(\Omega)}$ (abbreviated as $\|\cdot\|$).
We also use $H_0^1(\Omega)=\{w\in H^1(\Omega) : w|_{\partial \Omega}=0\}$.
Furthermore, throughout the article, we use the mark $C$ to denote a generic positive constant, which is independent of
spatial and temporal mesh.

The variational formulation of the problem \eqref{1.1} is to find $u(t)\in H_0^1(\Omega)$ such that
\begin{align}\label{2.0}
        ({}_0^CD_t^{\alpha} u,v)+a(u,v)+(qu,v)=(f,v),\ v\in H_0^1(\Omega),
\end{align}
where $a(u,v)=\int_{\Omega}\mathcal{A}\nabla u\cdot\nabla v\d \bm{x},\ \forall u,v\in H_0^1(\Omega)$.

Now, let $\mathcal{T}_h=\{K\}$ be a set of quasi-uniform triangular mesh of the domain $\Omega$ with $h=\max\{h_{K}\}$,
referring to Figure \ref{fig1},
where $h_{K}$ denote the diameter of the triangle $K\in \mathcal{T}_h$.
Then $\overline{\Omega}=\cup_{K\in \mathcal{T}_h}K$ and $Z_h$ denotes all vertices, that is
\begin{align*}
   Z_h=\{z:z \ \text{is a vertex of the element}\ K,  K\in \mathcal{T}_h\}.
\end{align*}
And $Z_h^0\subset Z_h$ denotes the set of all interior vertices in $\mathcal{T}_h$.

\begin{figure}[h]
   \begin{center}
      \includegraphics[width=7cm]{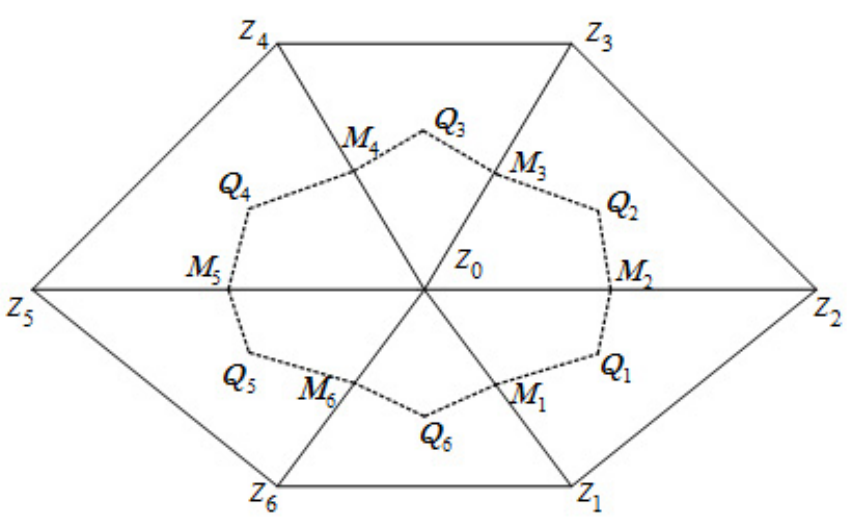}
   \end{center}
   \caption{Primal and dual partitions}\label{fig1}
\end{figure}

Next, let $\mathcal{T}_h^*$ be the dual mesh based on the primary mesh $\mathcal{T}_h$.
With $z_0\in Z_h^0$ as an interior node, let $z_i$ $(i=1,2,\ldots,m)$ be corresponding
adjacent nodes (as shown in Figure \ref{fig1}, $m=6$).
we denote the midpoints of $\overline{z_0z_i}$ by $M_i$,
and denote the barycenters of the triangle $\triangle z_0z_iz_{i+1}$ by $Q_i$, where $(i=1,2,\ldots,m)$
and $z_{m+1}=z_1$.
We construct the \textit{control volume} $K_{z_0}^*$ joining successively $M_1,Q_1,\ldots,M_m,Q_m,M_1$.
Then, the dual mesh $\mathcal{T}_h^*$ is defined by the union of the control volumes $K_{z_i}^*$.
We denote $Q_i,i=1,2,\ldots,m$ be all nodes of the control volume $K_{z_i}^*$,
and denote $Z_h^*=\{Q : Q\ \text{is a node of a control volume}\  K_z^*, K_z^*\in \mathcal{T}_h^*\}$.

Then, we define the following finite element space $V_h$ as the \textit{trial} function space
\begin{align*}
   V_h=\{v\in H_0^1(\Omega) : v|_K\in P_1(K),\ \forall K\in \mathcal{T}_h\},
\end{align*}
and define $V_h^*$ as the \textit{test} function space, that is
\begin{align*}
   V_h^*=\{v\in L^2(\Omega):v|_{K_{z}^*}\in P_0(K_z^*),\ \forall K_{z}^*\in \mathcal{T}_h^*, \
   \text{and}\ v|_{\partial\Omega}=0\}.
\end{align*}
It is obvious that $V_h=\text{span}\{\Phi_z(\bm{x}) : z\in Z_h^0\}$
and $V_h^*=\text{span}\{\Psi_z(\bm{x}) : z\in Z_h^*\}$,
where $\Phi_z$ is the standard linear basis function with the node $z$,
and $\Psi_z$ is the characteristic function of the control volume $K_z^*$.
Next, we will introduce two important interpolation operators.
Let $I_h : C(\Omega)\rightarrow V_h$ be the classical piecewise linear interpolation operator
and $I_h^* : C(\Omega)\rightarrow V_h^*$ be the piecewise constant interpolation operator,
which are defined by
\begin{align*}
      I_hv(\bm{x})=\sum_{z\in Z_h^0}v(z)\Phi_z(\bm{x}),\ \ \text{and}\ \ I_h^*v(\bm{x})=\sum_{z\in Z_h^0}v(z)\Psi_z(\bm{x}).
\end{align*}
From the Reference \cite{Li-Chen-Wu}, we can see that $I_h$ and $I_h^*$ have the following approximation property
\begin{align}
    &\|v-I_hv\|_j\leq Ch^{2-j}\|v\|_2,\ j=0,1,\ \forall v \in H^2(\Omega),\label{2.1}\\
    &\|v-I_h^*v\|\leq Ch\|v\|_1,\ \forall v \in H^1(\Omega).\label{2.2}
\end{align}

Now, integrating \eqref{1.1} on a control volume $K_z^*$ associated with a vertex $z\in Z_h$, and
applying the Green formula, we can get
\begin{align}\label{2.3}
     \int_{K_z^*} {}_0^CD_t^{\alpha} u\d \bm{x}-
      \int_{\partial K_z^*}\mathcal{A}\nabla u\cdot\bm n\d s
      +\int_{K_z^*}qu\d \bm{x}
      =\int_{K_z^*}f\d \bm{x},
\end{align}
where $\bm n$ means the outer-normal direction on $\partial K_z^*$.

Make use of the operator $I_h^*$ to rewrite \eqref{2.3} as the following formulation
\begin{align}\label{2.4}
      ({}_0^CD_t^{\alpha} u,I_h^*v_h)+a(u,I_h^*v_h)
      +(qu,I_h^*v_h)=(f,I_h^*v_h),\ \forall v_h\in V_h,
\end{align}
where $a(\cdot,\cdot)$ defined in \eqref{2.0}, following the References \cite{Li-Chen-Wu,Ewing-Lazarov-Lin}, can be rewritten as follows
\begin{align}\label{2.5}
      a(\bar{u},\bar{v})=
      \left\{
      \begin{array}{ll}
        -\sum_{z\in Z_h} \bar{v}(z)\int_{\partial K_z^*}\mathcal{A}\nabla \bar{u}\cdot\bm n\d s, & \forall \bar{u}\in V_h, \bar{v}\in V_h^*,\\
        \int_{\Omega} \mathcal{A}\nabla \bar{u}\cdot\nabla \bar{v}\d \bm{x}, & \forall \bar{u},\bar{v}\in H_0^1(\Omega).
   \end{array}\right.
\end{align}

Next, we make use of the $L1$-formula to approximate the Caputo fracional derivative.
First, we give a equidistant partition of the time interval $\overline{J}=[0,T]$ by
$0=t_0<t_1<\cdots<t_N=T$, where $t_n=n\tau$, $n=0,1,\cdots ,N$, and $\tau=T/N$ for some positive integer $N$.
For a given function $\varphi$ on $[0,T]$,
let $\varphi^n =\varphi(t_n)$ and $\partial_t\varphi^n=\frac{\varphi^n-\varphi^{n-1}}{\tau}$.
Following References \cite{Lin-Xu,Sun-Wu}, we can approximate the Caputo time fractional derivative
${}_0^CD_t^{\alpha} u(\bm{x},t)$ at $t=t_n$ as follows
\begin{equation}\label{2.6}
    \begin{split}
     {}_0^CD_t^{\alpha} u(\bm{x},t_n)
     &=\frac{1}{\Gamma(1-\alpha)}\int_0^{t_n}\frac{\partial u(\bm{x},s)}{\partial s}\frac{\d s}{(t_n-s)^\alpha}\\
     &=\frac{\tau^{1-\alpha}}{\Gamma(2-\alpha)}\sum_{k=0}^{n-1}b_k\frac{u(\bm{x},t_{n-k})-u(\bm{x},t_{n-k-1})}{\tau}
     +R_1^n(\bm{x})+R_2^n(\bm{x})\\
     &=\frac{\tau^{1-\alpha}}{\Gamma(2-\alpha)}\sum_{k=0}^{n-1} b_k\partial _t u^{n-k}
     +R_t^n(\bm{x})\\
     &=\frac{\tau^{-\alpha}}{\Gamma(2-\alpha)}\sum_{k=0}^{n}\tilde{b}_k^n u^k
     +R_t^n(\bm{x}),
   \end{split}
\end{equation}
where $b_k=(k+1)^{1-\alpha}-k^{1-\alpha}$, $\tilde{b}_n^n=1$, $\tilde{b}_0^n=(n-1)^{1-\alpha}-n^{1-\alpha}$,
$\tilde{b}_k^n=b_{n-k}-b_{n-k-1}$ $(0<k<n)$,
 $R_t^n(\bm{x})=R_1^n(\bm{x})+R_2^n(\bm{x})$, and
\begin{equation}\label{2.7}
    \begin{split}
     R_1^n(\bm{x})
     &=\frac{1}{\Gamma(1-\alpha)}\int_0^{t_n}\frac{\partial u(\bm{x},s)}{\partial s}\frac{\d s}{(t_n-s)^\alpha}
     -\frac{\tau^{1-\alpha}}{\Gamma(2-\alpha)}\sum_{k=0}^{n-1}
     b_k\frac{\partial}{\partial t} u(\bm{x},t_{n-k-1/2}),\\
     R_2^n(\bm{x})
     &=\frac{\tau^{1-\alpha}}{\Gamma(2-\alpha)}\sum_{k=0}^{n-1}
     b_k(\frac{\partial}{\partial t} u(\bm{x},t_{n-k-1/2})-\partial _t u^{n-k}).
   \end{split}
\end{equation}
Denote $D_{\tau}^{\alpha}\varphi^n=\frac{\tau^{-\alpha}}{\Gamma(2-\alpha)}\sum_{k=0}^n\tilde{b}_k^n\varphi^k$,
then we have $ {}_0^CD_t^{\alpha} u(\bm{x},t_n)=D_{\tau}^{\alpha} u^n+R_t^n(\bm{x})$.

Let $u_h^n$ be the fully discrete approximate solution of $u$ at $t=t_n$.
We give the fully discrete FVE scheme
to seek $u_h^n\in V_h,\ (n=0,1,\ldots,N)$, such that
\begin{align}\label{2.8}
      (D_{\tau}^{\alpha} u_h^n,I_h^*v_h)+a(u_h^n,I_h^*v_h)+(qu_h^n,I_h^*v_h)=(f^n,I_h^*v_h),
      \ \forall v_h\in V_h.
\end{align}

\begin{remark}
   Making use of the definition of $D_{\tau}^{\alpha}$, we can rewrite the fully discrete FVE scheme \eqref{2.8}
   as the following or other practical calculation formulation
   \begin{equation}\label{2.9}
      \begin{split}
         &\frac{1}{\Gamma(2-\alpha)}(u_h^n,I_h^*v_h)+\tau^{\alpha} a(u_h^n,I_h^*v_h)+\tau^{\alpha}(qu_h^n,I_h^*v_h)\\
      &\quad=\tau^{\alpha}(f^n,I_h^*v_h)-\frac{1}{\Gamma(2-\alpha)}\sum_{k=0}^{n-1}\tilde{b}_k^n(u_h^k,v_h),
      \ \forall v_h\in V_h.
      \end{split}
   \end{equation}
In Section \ref{sec4}, we will use \eqref{2.9} to prove the existence and uniqueness of the discrete solutions.
\end{remark}

\section{Some Lemmas}

First, we give some properties for the bilinear forms $(\cdot,I_h^*\cdot)$ and $a(\cdot,I_h^*\cdot)$,
which are very important in the later theoretical analysis.

\begin{lemma}$^{\text{\cite{Li-Chen-Wu}}}$\label{lem3.1}
The bilinear form $(\cdot,I_h^*\cdot)$ satisfies the following properties
\begin{align}\label{3.1}
   (v_h,I_h^*w_h)=(w_h,I_h^*v_h),\ \forall v_h,w_h\in V_h,
\end{align}
and there exist constants $\mu_1>0$ and $\mu_2>0$ independent of $h$ such that
\begin{align}\label{3.2}
   &(v_h,I_h^*v_h)\geq\mu_1\|v_h\|^2,\ \forall v_h \in V_h,\\
   &(v_h,I_h^*w_h)\leq \mu_2 \|v_h\| \|w_h\|,\ \forall v_h, w_h\in V_h.
\end{align}
\end{lemma}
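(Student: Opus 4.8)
The plan is to reduce the whole statement to a single, element-local $3\times3$ matrix and then exploit the affine invariance of the barycentric dual mesh. Writing $v_h=\sum_z v_h(z)\Phi_z$ and recalling that $\Psi_z$ is the characteristic function of the control volume $K_z^*$, I would first split the global inner product into element contributions,
\begin{align*}
   (v_h,I_h^*w_h)=\sum_{K\in\mathcal{T}_h}\sum_{i,j}w_h(z_i)\,v_h(z_j)\,m_{ij}^K,\qquad m_{ij}^K:=\int_{K\cap K_{z_i}^*}\Phi_{z_j}\,\d\bm{x},
\end{align*}
where $z_1,z_2,z_3$ are the vertices of $K$. Thus on each $K$ the form acts as the quadratic form $\bm{w}_K^{T}M_K\bm{v}_K$ with $M_K=(m_{ij}^K)$ and $\bm{v}_K=(v_h(z_1),v_h(z_2),v_h(z_3))^{T}$. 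Since the barycenter-and-midpoints subdivision of $K$, together with the barycentric coordinates $\Phi_{z_j}$, is preserved by the affine map from a fixed reference triangle $\widehat{K}$, the change of variables gives $M_K=(|K|/|\widehat{K}|)\,\widehat{M}$ for one universal reference matrix $\widehat{M}$. The entire lemma then collapses to the algebraic properties of $\widehat{M}$.

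Next I would compute $\widehat{M}$ explicitly on $\widehat{K}$ by integrating each linear $\Phi_{z_j}$ over each of the three dual quadrilaterals, splitting each quadrilateral into two triangles and using the centroid rule. The outcome is that all three diagonal entries of $\widehat{M}$ coincide and all six off-diagonal entries coincide, so $\widehat{M}=aI+bJ$ with $J$ the all-ones matrix and $a,b>0$; concretely one finds the diagonal and off-diagonal entries in the ratio $22:7$. The equality $m_{ij}^K=m_{ji}^K$ that this computation reveals is precisely the symmetry statement, so summing over $K$ yields \eqref{3.1}. I expect this symmetry to be the main obstacle: because the test functions are only piecewise constant, the form $(\cdot,I_h^*\cdot)$ is \emph{not} symmetric for an arbitrary dual mesh, and its symmetry here is a genuine consequence of the barycentric choice that only the explicit integration makes visible.

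Finally, from $\widehat{M}=aI+bJ$ I read off that $\widehat{M}$ is symmetric positive definite with eigenvalues $a$ (double) and $a+3b$ (simple); hence $M_K$ is symmetric positive definite with eigenvalues bounded above and below by fixed multiples of $|K|$ that are independent of $h$. Coercivity then follows by combining $(v_h,I_h^*v_h)=\sum_K\bm{v}_K^{T}M_K\bm{v}_K\geq c_1\sum_K|K|\,|\bm{v}_K|^2$ with the standard norm equivalence $\|v_h\|^2\asymp\sum_K|K|\,|\bm{v}_K|^2$ coming from the eigenvalues of the $P_1$ mass matrix $G_K=\frac{|K|}{12}(I+J)$; this produces the first inequality in \eqref{3.2} with an explicit $\mu_1>0$. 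The second inequality in \eqref{3.2} follows the same way: I would bound $|\bm{v}_K^{T}M_K\bm{w}_K|\leq c_2|K|\,|\bm{v}_K|\,|\bm{w}_K|$, use $|\bm{v}_K|^2\leq (12/|K|)\|v_h\|_{L^2(K)}^2$, and apply the discrete Cauchy--Schwarz inequality over the elements to obtain a bound $\mu_2\|v_h\|\,\|w_h\|$. Because every constant in this chain originates from the fixed matrices $\widehat{M}$ and $G_K/|K|$, all of them are independent of $h$, which is exactly what the statement requires.
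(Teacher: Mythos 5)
Your proposal is correct, but there is nothing in the paper to compare it against: the paper does not prove Lemma \ref{lem3.1} at all, it simply quotes it from the monograph of Li, Chen and Wu \cite{Li-Chen-Wu}. Your element-local argument is essentially the standard proof given in that reference, and your numbers check out: on the reference triangle the barycentric dual splits $\widehat{K}$ into three quadrilaterals of equal area, and integrating the barycentric coordinates over them (centroid rule on the two sub-triangles of each quadrilateral) gives diagonal entries $22/216$ and off-diagonal entries $7/216$ when $|\widehat{K}|=1/2$, i.e.\ your ratio $22:7$, so $\widehat{M}=aI+bJ$ with $a=15/216$, $b=7/216$, eigenvalues $a$ (double) and $a+3b=1/6$. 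Symmetry of $\widehat{M}$ then gives \eqref{3.1}, and the two inequalities follow exactly as you say by sandwiching $\sum_K|K|\,|\bm{v}_K|^2$ between the eigenvalues of $M_K$ and of the $P_1$ mass matrix $G_K=\frac{|K|}{12}(I+J)$, together with Cauchy--Schwarz over elements.

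Two points worth making explicit if you write this up. First, the boundary: the paper's $I_h^*$ sums only over interior vertices, so on elements touching $\partial\Omega$ the representation $(v_h,I_h^*w_h)|_K=\bm{w}_K^{T}M_K\bm{v}_K$ with all three vertices included is legitimate only because $v_h,w_h\in V_h$ vanish at boundary vertices; this deserves one sentence. Second, a pleasant by-product of your argument: since both $M_K$ and $G_K$ scale exactly like $|K|$ under the affine map, the constants $\mu_1,\mu_2$ you produce are independent not only of $h$ but of any shape-regularity or quasi-uniformity of the mesh, which is slightly stronger than what the lemma as stated requires. Your observation that the symmetry \eqref{3.1} is a genuine consequence of the barycentric choice of dual mesh (it fails for a general dual partition, and the analogous form for higher-order trial spaces is not symmetric) is also accurate, and it is precisely why the paper needs the antisymmetry estimate of Lemma \ref{lem3.2} for $a(\cdot,I_h^*\cdot)$, where no such cancellation occurs.
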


\begin{lemma}$^{\text{\cite{Li-Chen-Wu,Ewing-Lazarov-Lin}}}$\label{lem3.2}
The bilinear form $a(\cdot,I_h^*\cdot)$ can be expressed as
\begin{align}
   a(v_h,I_h^*w_h)=a_h(v_h,I_h^*w_h)+b_h(v_h,I_h^*w_h),\ \forall v_h,w_h\in V_h,
\end{align}
where $a_h(v_h,I_h^*w_h)=\sum_{K_Q\in \mathcal{T}_h}\{\mathcal{A}(Q)\nabla v_h(Q)\cdot \nabla w_h(Q)\}S_Q$, ($S_Q$ is the area of $K_Q$).
Then, the bilinear form $a_h(\cdot,I_h^*\cdot)$ has the following properties
\begin{align}
   a_h(v_h,I_h^*w_h)=a_h(w_h,I_h^*v_h),\ \forall v_h,w_h\in V_h,
\end{align}
and there exists constants $\mu_3>0$ and $\mu_4>0$ independent of $h$ such that
\begin{align}
   &a_h(v_h,I_h^*v_h)\geq\mu_3\|v_h\|_1^2,\ \forall w_h\in V_h,\\
   &a_h(v_h,I_h^*w_h)\leq \mu_4\|v_h\|_1 \|w_h\|_1,\ \forall v_h, w_h\in V_h.
\end{align}
We also have that there exist a positive constant $\mu_5$ such that
\begin{align}
   &|b_h(v_h,I_h^*w_h)|\leq \mu_5h\|v_h\|_1\|w_h\|_1,\ \forall v_h,w_h\in V_h,\\
   &|a(v_h,I_h^*w_h)-a(w_h,I_h^*v_h)|\leq \mu_5 h \|v_h\|_1\|w_h\|_1,\ \forall v_h, w_h\in V_h.
\end{align}
\end{lemma}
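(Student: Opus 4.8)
The plan is to reduce the whole statement to an element-by-element computation of the finite volume form and to extract the quadrature form $a_h$ as its leading part. I would start from the first line of the definition \eqref{2.5}: for $v_h,w_h\in V_h$,
$$a(v_h,I_h^*w_h)=-\sum_{z\in Z_h}w_h(z)\int_{\partial K_z^*}\mathcal{A}\nabla v_h\cdot\bm n\,\d s,$$
and split the control-volume boundary integrals into contributions from each triangle $K\in\mathcal{T}_h$. Inside a triangle $K$ with vertices $P_1,P_2,P_3$, the dual partition contributes three segments joining the edge midpoints $M_{ij}$ to the barycenter $Q$, each shared by two adjacent control volumes with opposite outward normals. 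Collecting the two sides of every such interior segment $e$, I would rewrite the element contribution as a sum over these segments of terms $[w_h(P_j)-w_h(P_i)]\,(\nabla v_h)^T\!\int_e\mathcal{A}\,\bm n\,\d s$. This exposes the crucial structure: since $w_h$ is linear, it enters only through the differences $w_h(P_j)-w_h(P_i)=\nabla w_h\cdot(P_j-P_i)$, which are $O(h\,|\nabla w_h|_K|)$.

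The key step is the constant-coefficient identity. Because $v_h$ is linear, $\nabla v_h$ is constant on $K$; freezing $\mathcal{A}$ at its barycenter value $\mathcal{A}(Q)$ and using that $\int_e\bm n\,\d s$ equals the $90^\circ$ rotation of the endpoint displacement (hence depends only on the endpoints of $e$), I would verify by direct computation that
$$-\sum_{i}w_h(P_i)\int_{\gamma_i}\mathcal{A}(Q)\nabla v_h\cdot\bm n\,\d s=\mathcal{A}(Q)\nabla v_h(Q)\cdot\nabla w_h(Q)\,S_K,$$
where $\gamma_i$ is the part of $\partial K_{P_i}^*$ in $K$. Summing over $K$ identifies this frozen-coefficient form as exactly $a_h(v_h,I_h^*w_h)$, and I would then \emph{define} $b_h:=a-a_h$, which establishes the stated decomposition. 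This identity is the classical fact that, for piecewise-linear trial functions and the barycentric dual mesh, the finite-volume and finite-element stiffness matrices coincide for constant coefficients; I expect the bookkeeping of segment orientations and the area/rotation algebra to be the main obstacle.

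The remaining estimates are then routine. For $b_h$, the subtraction leaves the factor $\int_e(\mathcal{A}-\mathcal{A}(Q))\bm n\,\d s$, bounded by $Ch\,|e|\le Ch^2$ via the Lipschitz continuity of $\mathcal{A}$ and $|e|\le Ch$; together with $|w_h(P_j)-w_h(P_i)|\le Ch\,|\nabla w_h|_K|$, each triangle contributes $O(h^3\,|\nabla v_h|_K|\,|\nabla w_h|_K|)$. Converting pointwise gradients through $|\nabla v_h|_K|=S_K^{-1/2}\|\nabla v_h\|_{L^2(K)}$ and applying the Cauchy--Schwarz inequality over the elements yields $|b_h(v_h,I_h^*w_h)|\le\mu_5 h\,|v_h|_1|w_h|_1\le\mu_5 h\,\|v_h\|_1\|w_h\|_1$. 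For $a_h$ itself, symmetry is immediate from the symmetry of $\mathcal{A}(Q)$; coercivity follows from uniform positive definiteness, $a_h(v_h,I_h^*v_h)\ge\beta_0|v_h|_1^2$, combined with the Poincar\'e inequality on $H_0^1(\Omega)$ to pass from the seminorm to $\|\cdot\|_1$; boundedness follows from the uniform bound on $\mathcal{A}$ and Cauchy--Schwarz. Finally, since $a_h$ is symmetric,
$$a(v_h,I_h^*w_h)-a(w_h,I_h^*v_h)=b_h(v_h,I_h^*w_h)-b_h(w_h,I_h^*v_h),$$
so the $O(h)$ bound on $b_h$ gives the asymmetry estimate with the same constant $\mu_5$.
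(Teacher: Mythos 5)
Your proposal is correct, but a comparison with ``the paper's proof'' is somewhat moot: the paper does not prove Lemma \ref{lem3.2} at all --- it is quoted as a known result from \cite{Li-Chen-Wu,Ewing-Lazarov-Lin}. What you have written is essentially a reconstruction of the standard argument in those references: split $a(v_h,I_h^*w_h)$ element by element, pair the two sides of each interior dual segment to expose the differences $w_h(P_j)-w_h(P_i)$, freeze $\mathcal{A}$ at the barycenter to obtain the classical identity that the constant-coefficient FVE form on the barycentric (Donald) dual mesh coincides with the FE stiffness form $\mathcal{A}(Q)\nabla v_h\cdot\nabla w_h\,S_K$, define $b_h:=a-a_h$, and bound $b_h$ by Lipschitz continuity of $\mathcal{A}$ plus quasi-uniformity ($S_K\geq ch^2$, which is exactly where your conversion $|\nabla v_h|_K=S_K^{-1/2}\|\nabla v_h\|_{L^2(K)}$ turns the per-element $O(h^3)$ bound into a global $O(h)$ bound). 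All the structural claims you make are true; in particular the frozen-coefficient identity can be checked cleanly by Green's theorem on each quadrilateral $K_{P_i}^*\cap K$ together with exactness of the midpoint rule for linear functions on the edges of $K$, which may be less error-prone than the rotation bookkeeping you anticipate. Two cosmetic points: symmetry of $a_h$ uses the assumed symmetry of the matrix $\mathcal{A}(Q)$ (worth saying explicitly, since the whole point of the lemma is that $a(\cdot,I_h^*\cdot)$ itself is \emph{not} symmetric); and in the last estimate the triangle inequality gives the constant $2\mu_5$ rather than ``the same constant $\mu_5$'' --- harmless, since one simply takes the larger constant as the $\mu_5$ of the statement.
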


\begin{lemma}$^{\text{\cite{Li-Chen-Wu,Ewing-Lazarov-Lin}}}$\label{lem3.3}
   There exist positive constants $h_0, \mu_6$ and $\mu_7$ such that, for $0<h\leq h_0$
\begin{align}
   &a(v_h,I_h^*v_h)\geq\mu_6\|v_h\|_1^2,\ \forall v_h\in V_h,\\
   &a(v_h,I_h^*w_h)\leq \mu_7\|v_h\|_1 \|w_h\|_1,\ \forall v_h, w_h\in V_h.
\end{align}
\end{lemma}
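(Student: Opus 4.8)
The plan is to derive both estimates as a direct consequence of the decomposition
\begin{align*}
a(v_h,I_h^*w_h)=a_h(v_h,I_h^*w_h)+b_h(v_h,I_h^*w_h),\ \forall v_h,w_h\in V_h,
\end{align*}
established in Lemma \ref{lem3.2}, treating $b_h$ as a small perturbation whose size is controlled by a factor of $h$. Since $a_h(\cdot,I_h^*\cdot)$ already enjoys coercivity and boundedness by Lemma \ref{lem3.2}, and the defect term $b_h$ carries the factor $h$, the whole argument reduces to choosing a mesh threshold $h_0$ small enough that this defect cannot overwhelm the positive lower bound on $a_h$.

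First, for the coercivity estimate I would set $w_h=v_h$ and use the lower bound on $a_h$ together with the perturbation estimate $|b_h(v_h,I_h^*v_h)|\leq \mu_5 h\|v_h\|_1^2$, both from Lemma \ref{lem3.2}, to obtain
\begin{align*}
a(v_h,I_h^*v_h)=a_h(v_h,I_h^*v_h)+b_h(v_h,I_h^*v_h)\geq \mu_3\|v_h\|_1^2-\mu_5 h\|v_h\|_1^2=(\mu_3-\mu_5 h)\|v_h\|_1^2.
\end{align*}
The key step is then to fix $h_0=\mu_3/(2\mu_5)$, so that for every $0<h\leq h_0$ the coefficient satisfies $\mu_3-\mu_5 h\geq \mu_3/2>0$. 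Taking $\mu_6=\mu_3/2$ gives the required coercivity.

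For the boundedness estimate I would simply apply the triangle inequality together with the two upper bounds from Lemma \ref{lem3.2}, namely $a_h(v_h,I_h^*w_h)\leq \mu_4\|v_h\|_1\|w_h\|_1$ and $|b_h(v_h,I_h^*w_h)|\leq \mu_5 h\|v_h\|_1\|w_h\|_1$, to get
\begin{align*}
a(v_h,I_h^*w_h)\leq |a_h(v_h,I_h^*w_h)|+|b_h(v_h,I_h^*w_h)|\leq (\mu_4+\mu_5 h)\|v_h\|_1\|w_h\|_1.
\end{align*}
For $0<h\leq h_0$ the right-hand side is at most $(\mu_4+\mu_5 h_0)\|v_h\|_1\|w_h\|_1$, so I would set $\mu_7=\mu_4+\mu_5 h_0$. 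This argument presents no genuine difficulty, as everything is inherited from Lemma \ref{lem3.2}; the only point requiring care is the explicit choice of $h_0$, which guarantees that the non-symmetric, non-coercive defect of $a(\cdot,I_h^*\cdot)$, fully absorbed into $b_h$ and vanishing linearly as $h\to 0$, stays strictly smaller than the fixed lower bound $\mu_3$ on $a_h(\cdot,I_h^*\cdot)$.
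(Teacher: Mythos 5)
Your proof is correct, and it is essentially the intended argument: the paper states Lemma \ref{lem3.3} without proof as a quoted result from \cite{Li-Chen-Wu,Ewing-Lazarov-Lin}, and in those references (and implicitly in the paper's ordering of Lemma \ref{lem3.2} before Lemma \ref{lem3.3}) the result is obtained exactly by your perturbation argument, absorbing the $O(h)$ defect $b_h$ into the coercivity constant $\mu_3$ of $a_h(\cdot,I_h^*\cdot)$ once $h\leq h_0=\mu_3/(2\mu_5)$. Your explicit choices $\mu_6=\mu_3/2$ and $\mu_7=\mu_4+\mu_5h_0$ are valid and complete the derivation.
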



Next, following the Reference \cite{Lin-Xu},
we give the estimates of the truncation errors $R_1^n$, $R_2^n$ and $R_t^n$ defined by \eqref{2.7},
and two important lemmas for stability and error analysis.

\begin{lemma}$^{\text{\cite{Lin-Xu}}}$\label{lem3.4}
For the truncation error $R_1^n(\bm{x})$, $R_2^n(\bm{x})$ and $R_t^n(\bm{x})$,
there exists a constant $C>0$ independent of $h$ and $\tau$ such that
\begin{equation*}
   \begin{split}
     &\|R_1^n\|\leq C\tau^{2-\alpha},\ \|R_2^n\|\leq C\tau^2,\\
     &\|R_t^n\|\leq C(\tau^2+\tau^{2-\alpha}).
   \end{split}
\end{equation*}
\end{lemma}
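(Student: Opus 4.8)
The plan is to estimate the three truncation quantities separately, exploiting that $R_1^n$ is a weighted midpoint quadrature error while $R_2^n$ is a central-difference error, and then to combine them for $R_t^n$. Since all bounds are obtained pointwise in $\bm{x}$ with constants that absorb suprema of the time derivatives of $u$ (finite by the smoothness assumptions on the data), the stated $L^2(\Omega)$ estimates follow by taking norms inside the sums. The preparatory observation is that the substitution $w=t_n-s$ gives $\int_{t_j}^{t_{j+1}}(t_n-s)^{-\alpha}\ds=\frac{\tau^{1-\alpha}}{1-\alpha}\big[(n-j)^{1-\alpha}-(n-j-1)^{1-\alpha}\big]$, so after re-indexing $k=n-1-j$ the midpoint sum in $R_1^n$ is exactly $\frac{1}{\Gamma(1-\alpha)}\sum_{j}\frac{\partial u}{\partial t}(\bm{x},t_{j+1/2})\int_{t_j}^{t_{j+1}}(t_n-s)^{-\alpha}\ds$, whence $R_1^n(\bm{x})=\frac{1}{\Gamma(1-\alpha)}\sum_{j=0}^{n-1}e_j(\bm{x})$ with $e_j=\int_{t_j}^{t_{j+1}}\big[\tfrac{\partial u}{\partial t}(\bm{x},s)-\tfrac{\partial u}{\partial t}(\bm{x},t_{j+1/2})\big](t_n-s)^{-\alpha}\ds$.

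The hard part will be the $R_1^n$ bound, because the naive estimate $|\frac{\partial u}{\partial t}(s)-\frac{\partial u}{\partial t}(t_{j+1/2})|\le C\tau$ only yields $O(\tau)$ after summation (using $\sum_{k=0}^{n-1}b_k=n^{1-\alpha}$ and $n^{1-\alpha}\tau^{1-\alpha}=t_n^{1-\alpha}$), thereby losing the midpoint cancellation. To recover the extra half order I would split the sum at the singularity. For $j\le n-2$, where the weight $\phi(s)=(t_n-s)^{-\alpha}$ is smooth on $[t_j,t_{j+1}]$, I expand $\frac{\partial u}{\partial t}(s)-\frac{\partial u}{\partial t}(t_{j+1/2})=u_{tt}(t_{j+1/2})(s-t_{j+1/2})+O(\tau^2)$; the leading part integrates against $(s-t_{j+1/2})$ with a nonconstant weight, and writing $\phi(s)=\phi(t_{j+1/2})+\phi'(t_{j+1/2})(s-t_{j+1/2})+\cdots$ the symmetric term vanishes, leaving a contribution bounded by $C|\phi'(t_{j+1/2})|\tau^3\le C\tau^3((n-j)\tau)^{-\alpha-1}=C\tau^{2-\alpha}(n-j)^{-\alpha-1}$. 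Summing over $j\le n-2$ gives $C\tau^{2-\alpha}\sum_{m=2}^{n}m^{-\alpha-1}\le C\tau^{2-\alpha}$, the series converging since $\alpha+1>1$, while the second-order Taylor remainder contributes $C\tau^2\sum_{k}b_k\tau^{1-\alpha}=C\tau^2 t_n^{1-\alpha}\le C\tau^2$. For the single singular interval $j=n-1$ I bound $e_{n-1}$ crudely, $|e_{n-1}|\le C\tau\int_{t_{n-1}}^{t_n}(t_n-s)^{-\alpha}\ds=C\tau\cdot\frac{\tau^{1-\alpha}}{1-\alpha}=C\tau^{2-\alpha}$, which is already of the desired order. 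Collecting these gives $\|R_1^n\|\le C\tau^{2-\alpha}$.

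The term $R_2^n$ is more routine: $\partial_t u^{n-k}=\frac{u(t_{n-k})-u(t_{n-k-1})}{\tau}$ is the central-difference approximation of $\frac{\partial u}{\partial t}$ at the midpoint $t_{n-k-1/2}$, so a Taylor expansion about that midpoint gives $|\frac{\partial u}{\partial t}(\bm{x},t_{n-k-1/2})-\partial_t u^{n-k}|\le C\tau^2$ (requiring a bounded third time derivative of $u$). Multiplying by $\frac{\tau^{1-\alpha}}{\Gamma(2-\alpha)}b_k$, summing, and again using $\sum_{k=0}^{n-1}b_k=n^{1-\alpha}$ together with $n^{1-\alpha}\tau^{1-\alpha}=t_n^{1-\alpha}\le T^{1-\alpha}$ yields $\|R_2^n\|\le C\tau^2$. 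Finally, $\|R_t^n\|\le\|R_1^n\|+\|R_2^n\|\le C(\tau^2+\tau^{2-\alpha})$ by the triangle inequality, which establishes all three estimates; these are the classical $L1$-formula truncation bounds, so one may alternatively invoke Reference~\cite{Lin-Xu} directly.
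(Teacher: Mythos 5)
Your estimates and the overall strategy are correct, but note that the paper itself does not prove this lemma at all: it is quoted from Reference \cite{Lin-Xu} (hence the citation superscript), and the decomposition $R_t^n=R_1^n+R_2^n$ in \eqref{2.7} is set up precisely so that the bounds can be imported. What you have written is essentially a self-contained reconstruction of the Lin--Xu argument: the identity $\frac{\tau^{1-\alpha}}{\Gamma(2-\alpha)}b_k=\frac{1}{\Gamma(1-\alpha)}\int_{t_{n-k-1}}^{t_{n-k}}(t_n-s)^{-\alpha}\ds$ (so that $R_1^n$ is a sum of weighted midpoint-quadrature errors $e_j$), the splitting into the singular interval $j=n-1$ (crude $O(\tau^{2-\alpha})$ bound) and the remaining intervals (midpoint cancellation plus summability of $(n-j)^{-1-\alpha}$), and the telescoping $\sum_{k=0}^{n-1}b_k=n^{1-\alpha}$ for $R_2^n$. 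Two details deserve care. First, in the cancellation step the weight $\phi(s)=(t_n-s)^{-\alpha}$ has derivatives that increase toward $t_n$, so the Taylor remainder of $\phi$ hidden in your ``$\cdots$'' must be bounded by $|\phi'|$ and $|\phi''|$ at the right endpoint $t_{j+1}$, giving factors $(n-j-1)^{-1-\alpha}$ and $(n-j-1)^{-2-\alpha}$ for $j\le n-2$; these are still summable, so nothing breaks, but evaluation at $t_{j+1/2}$ alone does not control that remainder. Second, your argument requires $u_{tt}$ and $u_{ttt}$ bounded in time (with values in $L^2(\Omega)$), consistent with the paper's standing smoothness assumption but worth stating explicitly, since the $\tau^{2-\alpha}$ rate genuinely fails for solutions with the typical weak singularity at $t=0$ (this is the issue addressed in \cite{b3}). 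With these caveats, your proof buys what the paper's citation does not: an explicit account of the regularity needed and of how the constant depends on $T^{1-\alpha}$; the citation buys brevity.
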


\begin{lemma}$^{\text{\cite{Lin-Xu}}}$\label{lem3.5}
Let $\varphi^k\geq0$, $k=0,1,\ldots,N$, $\zeta>0$ be a constant, which satisfy
\begin{align*}
   \varphi^n\leq -\sum_{k=1}^{n-1}\tilde{b}_k^n\varphi^k+\zeta,
\end{align*}
then there exists a constant $C>0$ independent of $\tau$ such that
\begin{align*}
   \varphi^n\leq C\tau^{-\alpha}\zeta,\ n=1,2,\ldots,N.
\end{align*}
\end{lemma}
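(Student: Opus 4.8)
The plan is to exploit the sign pattern and the telescoping structure of the weights $\tilde{b}_k^n$, run a strong induction on $n$, and close with a mean value estimate that produces exactly the factor $\tau^{-\alpha}$. First I would rewrite the right-hand coefficients transparently: for $1\le k\le n-1$ the definition $\tilde{b}_k^n=b_{n-k}-b_{n-k-1}$ gives $-\tilde{b}_k^n=b_{n-k-1}-b_{n-k}$, so the change of index $j=n-k$ turns the hypothesis into
\begin{align*}
   \varphi^n\le\sum_{j=1}^{n-1}(b_{j-1}-b_j)\varphi^{n-j}+\zeta .
\end{align*}
The elementary facts I need about $b_j=(j+1)^{1-\alpha}-j^{1-\alpha}$ are that $b_0=1$, that $\{b_j\}$ is positive and strictly decreasing (since $x\mapsto x^{1-\alpha}$ is concave for $0<\alpha<1$), so each coefficient $b_{j-1}-b_j$ is nonnegative, and that $\sum_{j=1}^{n-1}(b_{j-1}-b_j)=b_0-b_{n-1}=1-b_{n-1}$ by telescoping.

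Next I would prove by strong induction that $\varphi^n\le\zeta/b_{n-1}$. The base case $n=1$ is immediate, since the sum is empty and $b_0=1$. For the inductive step, the indices on the right are $m=n-j\in\{1,\dots,n-1\}$; the induction hypothesis gives $\varphi^m\le\zeta/b_{m-1}$, and monotonicity of $\{b_j\}$ (so $b_{m-1}\ge b_{n-1}$ whenever $m\le n$) upgrades this to $\varphi^m\le\zeta/b_{n-1}$. Substituting and using the telescoping identity yields
\begin{align*}
   \varphi^n\le\frac{\zeta}{b_{n-1}}(1-b_{n-1})+\zeta=\frac{\zeta}{b_{n-1}},
\end{align*}
which closes the induction.

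Finally I would convert $1/b_{n-1}$ into the stated form. Applying the mean value theorem to $x\mapsto x^{1-\alpha}$ on $[n-1,n]$ gives $b_{n-1}=(1-\alpha)\xi^{-\alpha}$ for some $\xi\in(n-1,n)$, hence $b_{n-1}\ge(1-\alpha)n^{-\alpha}$ and $1/b_{n-1}\le n^{\alpha}/(1-\alpha)$. Since $n\le N=T/\tau$, we have $n^{\alpha}\le T^{\alpha}\tau^{-\alpha}$, so $\varphi^n\le\frac{T^{\alpha}}{1-\alpha}\tau^{-\alpha}\zeta$, establishing the claim with $C=T^{\alpha}/(1-\alpha)$, independent of $\tau$.

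The main obstacle is not any single estimate but getting the bookkeeping right: correctly reading off the sign of $\tilde{b}_k^n$ after the index shift, and choosing the induction hypothesis in the sharp form $\varphi^n\le\zeta/b_{n-1}$, which is what both telescopes cleanly and meshes with the monotonicity of $\{b_j\}$. A looser hypothesis will not close the induction, and it is the mean value lower bound on $b_{n-1}$ that pins the exponent at $\alpha$ rather than a worse power of $\tau^{-1}$.
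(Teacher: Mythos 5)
Your proof is correct and takes essentially the same route as the paper: the paper itself only cites Lemma \ref{lem3.5} from \cite{Lin-Xu}, but its in-house proof of the generalization (Lemma \ref{lem3.6}) is exactly this strong induction with the sharp hypothesis $\varphi^n\le C_0^n(\varphi^0+b_{n-1}^{-1}\zeta)$, of which your $\varphi^n\le\zeta/b_{n-1}$ is the $C_0=1$, $\varphi^0$-absent specialization, closed by the same telescoping and monotonicity of $\{b_j\}$. The only difference is that you derive the final bound $n^{-\alpha}b_{n-1}^{-1}\le 1/(1-\alpha)$ from the mean value theorem, whereas the paper simply quotes that inequality from \cite{Lin-Xu}.
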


\begin{lemma}\label{lem3.6}
Let $\varphi^k\geq0$, $k=0,1,\ldots,N$, $\zeta>0$ and $C_0\geq 1$ be two constants, which satisfy
\begin{align}\label{b1}
   \varphi^n\leq -C_0\sum_{k=0}^{n-1}\tilde{b}_k^n\varphi^k+\zeta,
\end{align}
Then, we have
\begin{align}\label{b3}
   \varphi^n\leq C_0^n(\varphi^0+b_{n-1}^{-1}\zeta),\ n=1,2,\cdots,N.
\end{align}
Furthermore, there exists a constant $C>0$ independent of $\tau$ such that
\begin{align}\label{b2}
   \varphi^n\leq C C_0^n(\varphi^0+\tau^{-\alpha}\zeta),\ n=1,2,\ldots,N.
\end{align}
\end{lemma}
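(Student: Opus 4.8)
The plan is to establish \eqref{b3} by induction on $n$, reducing the inductive step to a single inequality for the weights, and then to obtain \eqref{b2} from \eqref{b3} by a mean value estimate. I would begin by recording the relevant properties of the coefficients. Since $1-\alpha\in(0,1)$, the function $x\mapsto x^{1-\alpha}$ is strictly increasing and concave, so $b_k=(k+1)^{1-\alpha}-k^{1-\alpha}$ is positive, strictly decreasing, with $b_0=1$ and $b_k\to0$. Setting $g_j=b_{j-1}-b_j>0$, telescoping together with $b_j\to0$ gives $\sum_{j\ge1}g_j=b_0=1$ and $b_m=\sum_{j>m}g_j$. From the definition of $\tilde b_k^n$ one finds by telescoping that $-\tilde b_0^n=b_{n-1}$, that $-\tilde b_k^n=b_{n-k-1}-b_{n-k}>0$ for $0<k<n$, and, most importantly, $\sum_{k=0}^{n-1}(-\tilde b_k^n)=b_0=1$. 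Writing $c_k^n:=-\tilde b_k^n>0$, the hypothesis \eqref{b1} then reads $\varphi^n\le C_0\sum_{k=0}^{n-1}c_k^n\varphi^k+\zeta$ with $\sum_{k=0}^{n-1}c_k^n=1$ and $c_0^n=b_{n-1}$.

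For the induction, the base case $n=1$ is immediate because $c_0^1=b_0=1$ and $b_0^{-1}=1$. In the inductive step I would substitute the hypotheses $\varphi^k\le C_0^k(\varphi^0+b_{k-1}^{-1}\zeta)$ for $1\le k\le n-1$ (and $\varphi^0\le\varphi^0$ for $k=0$) into the recursion and separate the contributions of $\varphi^0$ and $\zeta$. Using $C_0\ge1$, $C_0^k\le C_0^{n-1}$ and $\sum_{k=0}^{n-1}c_k^n=1$, the coefficient of $\varphi^0$ collapses to $C_0^n$, while the coefficient of $\zeta$ is bounded, using $C_0^{k+1}\le C_0^n$ and $1\le C_0^n$, by $C_0^n\big(\sum_{k=1}^{n-1}c_k^n b_{k-1}^{-1}+1\big)$. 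Hence the step closes, and \eqref{b3} follows, as soon as one proves the coefficient inequality
\[
\sum_{k=1}^{n-1}c_k^n\,b_{k-1}^{-1}+1\le b_{n-1}^{-1}. \qquad (\star)
\]

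I expect $(\star)$ to be the crux. The obstacle is that merely using $\sum_k c_k^n=1$ and bounding every $\varphi^k$ crudely by $C_0^{n-1}(\varphi^0+b_{n-1}^{-1}\zeta)$ produces \eqref{b3} with a spurious additive $+\zeta$; recovering the sharp constant $b_{n-1}^{-1}$ requires the detailed structure of the weights rather than just the normalization $\sum_k c_k^n=1$. To prove $(\star)$ I would reindex by $j=n-k$ and multiply through by $b_{n-1}>0$; invoking the tail-sum identity to replace $1-b_{n-1}$ by $\sum_{j=1}^{n-1}g_j$, the inequality reduces to
\[
\sum_{j=1}^{n-1}g_j\big(b_{n-1}\,b_{n-j-1}^{-1}-1\big)\le0,
\]
which holds termwise since $g_j>0$ and $b_{n-1}\le b_{n-j-1}$ by the monotonicity of $b$. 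This settles $(\star)$ and completes the proof of \eqref{b3}.

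Finally, I would derive \eqref{b2} from \eqref{b3}. By the mean value theorem, $b_{n-1}=n^{1-\alpha}-(n-1)^{1-\alpha}=(1-\alpha)\xi^{-\alpha}$ for some $\xi\in(n-1,n)$, hence $b_{n-1}\ge(1-\alpha)n^{-\alpha}$ and $b_{n-1}^{-1}\le(1-\alpha)^{-1}n^{\alpha}$. Since $n\tau=t_n\le T$ forces $n^{\alpha}\le(T/\tau)^{\alpha}=T^{\alpha}\tau^{-\alpha}$, we get $b_{n-1}^{-1}\le C\tau^{-\alpha}$ with $C=T^{\alpha}/(1-\alpha)$ independent of $\tau$. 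Substituting into \eqref{b3} gives $\varphi^n\le C_0^n(\varphi^0+C\tau^{-\alpha}\zeta)\le CC_0^n(\varphi^0+\tau^{-\alpha}\zeta)$, which is \eqref{b2}.
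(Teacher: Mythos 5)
Your proof is correct and follows essentially the same route as the paper: induction on $n$ for \eqref{b3}, with the inductive step closed by the normalization $\sum_{k=0}^{n-1}(-\tilde b_k^n)=1$ together with the monotonicity $b_{n-1}\le b_j$ for $j<n-1$, and then \eqref{b2} obtained from $b_{n-1}^{-1}\le(1-\alpha)^{-1}n^\alpha\le(1-\alpha)^{-1}T^\alpha\tau^{-\alpha}$. Your isolation of the coefficient inequality $(\star)$ is only a repackaging of the paper's step, which instead absorbs the standalone $\zeta$ into the $k=0$ term via $C_0 b_m\varphi^0+\zeta\le C_0 b_m(\varphi^0+b_m^{-1}\zeta)$ and bounds every $b_j^{-1}$ by $b_m^{-1}$ before summing the telescoping coefficients to $1$ --- the same two ingredients in a different order.
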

\begin{proof}
First, we use the mathematical induction to prove the following result \eqref{b3}.
Choosing $n=1$ in \eqref{b1}, we have
\begin{align}\label{b4}
   \varphi^1\leq C_0(1-b_1)\varphi^0+\zeta\leq C_0(\varphi^0+b_0^{-1}\zeta).
\end{align}
Therefore, \eqref{b3} is proved for the case $n=1$.

Suppose \eqref{b3} holds for all $n=1,2,\cdots,m$. Next, we will prove that \eqref{b3} also holds for the case $n=m+1$.
Choosing $n=m+1$ in \eqref{b1}, we have
\begin{equation}\label{b5}
    \begin{split}
      \varphi^{m+1}
      \leq& -\sum_{k=0}^{m}C_0\tilde{b}_k^{m+1}\varphi^k+\zeta\\
      \leq & \sum_{k=0}^{m}C_0(b_{m-k}-b_{m-k+1})\varphi^k+\zeta\\
      = & \sum_{k=0}^{m}C_0(b_{k}-b_{k+1})\varphi^{m-k}+\zeta\\
      \leq & C_0(1-b_1)\varphi^m
      +\sum_{k=1}^{m-1}C_0(b_{k}-b_{k+1})\varphi^{m-k}
      +C_0b_m\varphi^0+\zeta.
    \end{split}
\end{equation}
Noting that $1=b_0>b_1>b_2>\cdots>b_n>0$, $b_n\rightarrow 0$ $(n\rightarrow 0)$,
and $b_j^{-1}<b_{j+1}^{-1}$, making use of the induction assumption, we have
\begin{equation}\label{b6}
    \begin{split}
      \varphi^{m+1}
      \leq & C_0^{m+1}(1-b_1)(\varphi^0+b_{m-1}^{-1}\zeta)
      +\sum_{k=1}^{m-1}C_0^{m-k+1}(b_{k}-b_{k+1})(\varphi^0+b_{m-k-1}^{-1}\zeta)\\
      &+C_0b_m(\varphi^0+b_m^{-1}\zeta)\\
      \leq & C_0^{m+1}(\varphi^0+b_{m}^{-1}\zeta)
      \Big[(1-b_1)+\sum_{k=1}^{m-1}(b_{k}-b_{k+1})+b_m\Big]\\
      = & C_0^{m+1}(\varphi^0+b_{m}^{-1}\zeta).
    \end{split}
\end{equation}
Thus, \eqref{b3} is proved.

Next, following Reference \cite{Lin-Xu}, we see that $n^{-\alpha}b_{n-1}^{-1}\leq\dfrac{1}{1-\alpha}$, and obtain
\begin{align}\label{b7}
    \varphi^n
    \leq C_0^n(\varphi^0+b_{n-1}^{-1}\zeta)
    \leq C_0^n(\varphi^0+\dfrac{n^{\alpha}\tau^{\alpha}}{1-\alpha}\tau^{-\alpha}\zeta)
    \leq C_0^n(\varphi^0+\dfrac{T^{\alpha}}{1-\alpha}\tau^{-\alpha}\zeta).
\end{align}
Then we obtain the desired result.
\end{proof}

Next, we will give two identical relations of the bilinear forms $(\cdot,I_h^*\cdot)$ and $a(\cdot,I_h^*\cdot)$.

\begin{lemma}\label{lem3.7}
Let $\{\varphi^n\}_{n=0}^\infty$ be a function sequence on $V_h$,
then the following relation holds
\begin{equation}
   \begin{split}
      &a\Big(\varphi^n,I_h^*\Big(\sum_{k=0}^n \tilde{b}_k^n \varphi^k\Big)\Big)\\
      &\quad=\frac{1}{2}\Big[a(\varphi^n,I_h^*\varphi^n)
      +\sum_{k=0}^{n-1} \tilde{b}_k^n \Big(a(\varphi^k,I_h^*\varphi^k)
      -a(\varphi^n-\varphi^k,I_h^*(\varphi^n-\varphi^k))\Big)\Big]\\
      &\quad\ \ \ +\frac{1}{2}\sum_{k=0}^{n-1} \tilde{b}_k^n
      \Big[a(\varphi^n,I_h^*\varphi^k)-a(\varphi^k,I_h^*\varphi^n)\Big].
   \end{split}
\end{equation}
\end{lemma}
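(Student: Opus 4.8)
The plan is to reduce everything to the bilinearity of $a(\cdot,I_h^*\cdot)$ --- it is linear in each argument since $a$ is bilinear and $I_h^*$ is a linear operator --- together with one elementary identity for the coefficients, namely $\sum_{k=0}^{n}\tilde b_k^n=0$, equivalently $\sum_{k=0}^{n-1}\tilde b_k^n=-1$ because $\tilde b_n^n=1$. This coefficient identity is the genuine ingredient of the lemma: it follows immediately from the telescoping structure by which the $\tilde b_k^n$ arise, since rewriting $\sum_{k=0}^{n-1}b_k(u^{n-k}-u^{n-k-1})$ and collecting powers of $u^k$ shows that every weight is a difference $b_{n-k}-b_{n-k-1}$, so the total sum of the weights cancels; put differently, the $L1$ operator $D_\tau^\alpha$ annihilates constant sequences.

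First I would expand the left-hand side by linearity and peel off the diagonal term using $\tilde b_n^n=1$,
\[
  a\Big(\varphi^n,I_h^*\Big(\sum_{k=0}^n\tilde b_k^n\varphi^k\Big)\Big)
  =\sum_{k=0}^{n}\tilde b_k^n\,a(\varphi^n,I_h^*\varphi^k)
  =a(\varphi^n,I_h^*\varphi^n)+\sum_{k=0}^{n-1}\tilde b_k^n\,a(\varphi^n,I_h^*\varphi^k).
\]
Next, for each $k<n$ I would split the cross term into its symmetric and antisymmetric parts,
\[
  a(\varphi^n,I_h^*\varphi^k)
  =\tfrac12\big[a(\varphi^n,I_h^*\varphi^k)+a(\varphi^k,I_h^*\varphi^n)\big]
  +\tfrac12\big[a(\varphi^n,I_h^*\varphi^k)-a(\varphi^k,I_h^*\varphi^n)\big],
\]
and treat the symmetric bracket by the polarization identity coming from bilinearity,
\[
  a(\varphi^n,I_h^*\varphi^k)+a(\varphi^k,I_h^*\varphi^n)
  =a(\varphi^n,I_h^*\varphi^n)+a(\varphi^k,I_h^*\varphi^k)
  -a(\varphi^n-\varphi^k,I_h^*(\varphi^n-\varphi^k)).
\]
The antisymmetric bracket is left untouched, since $a(\cdot,I_h^*\cdot)$ is not symmetric (cf. Lemma \ref{lem3.2}) and those terms are exactly what must survive in the second line of the claimed identity.

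Finally I would substitute these back and collect. The only quantity needing attention is the several copies of $a(\varphi^n,I_h^*\varphi^n)$: the diagonal term contributes $a(\varphi^n,I_h^*\varphi^n)$, while the symmetric parts contribute $\tfrac12\big(\sum_{k=0}^{n-1}\tilde b_k^n\big)\,a(\varphi^n,I_h^*\varphi^n)$. Invoking $\sum_{k=0}^{n-1}\tilde b_k^n=-1$ turns this into $a(\varphi^n,I_h^*\varphi^n)-\tfrac12 a(\varphi^n,I_h^*\varphi^n)=\tfrac12 a(\varphi^n,I_h^*\varphi^n)$, precisely the coefficient appearing on the right-hand side; the remaining pieces $a(\varphi^k,I_h^*\varphi^k)$, $a(\varphi^n-\varphi^k,I_h^*(\varphi^n-\varphi^k))$ and the antisymmetric differences already carry their factor $\tfrac12\tilde b_k^n$ and match term by term. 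The main obstacle is therefore not conceptual but purely the bookkeeping in this collection step, and it hinges entirely on the coefficient identity $\sum_{k=0}^{n-1}\tilde b_k^n=-1$; without it the coefficient of $a(\varphi^n,I_h^*\varphi^n)$ would fail to collapse to $\tfrac12$.
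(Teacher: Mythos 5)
Your proof is correct and follows essentially the same route as the paper's: expand by bilinearity, peel off the $k=n$ term, use the polarization identity $a(\varphi^n,I_h^*\varphi^k)+a(\varphi^k,I_h^*\varphi^n)=a(\varphi^n,I_h^*\varphi^n)+a(\varphi^k,I_h^*\varphi^k)-a(\varphi^n-\varphi^k,I_h^*(\varphi^n-\varphi^k))$ on the symmetric part, keep the antisymmetric remainder, and collapse the diagonal coefficient via $-\sum_{k=0}^{n-1}\tilde{b}_k^n=1$. The only cosmetic difference is ordering --- the paper invokes the coefficient identity first to write the diagonal term as $\frac{1}{2}\big[a(\varphi^n,I_h^*\varphi^n)-\sum_{k=0}^{n-1}\tilde{b}_k^n a(\varphi^n,I_h^*\varphi^n)\big]$ and then polarizes, whereas you polarize first and invoke the identity in the final collection step.
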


\begin{proof}
Noting the fact that $-\sum_{k=0}^{n-1}\tilde{b}_k^n=1$ and $b_n^n=1$, we have
\begin{equation}
  \begin{split}
     &a\Big(\varphi^n,I_h^*\Big(\sum_{k=0}^n \tilde{b}_k^n \varphi^k\Big)\Big)\\
     &\quad=a(\varphi^n,I_h^*\varphi^n)
     +\sum_{k=0}^{n-1} \tilde{b}_k^n a(\varphi^n,I_h^*\varphi^k)\\
     &\quad=\frac{1}{2}\Big[a(\varphi^n,I_h^*\varphi^n)
     +\sum_{k=0}^{n-1} \tilde{b}_k^n \Big(2a(\varphi^n,I_h^*\varphi^k)-a(\varphi^n,I_h^*\varphi^n)\Big)\Big]\\
     &\quad=\frac{1}{2}\Big[
      a(\varphi^n,I_h^*\varphi^n)
      +\sum_{k=0}^{n-1} \tilde{b}_k^n \Big(a(\varphi^k,I_h^*\varphi^k)
      -a(\varphi^n-\varphi^k,I_h^*(\varphi^n-\varphi^k))\Big)\Big]\\
       &\quad\ \ \ +\frac{1}{2}\sum_{k=0}^{n-1} \tilde{b}_k^n
      \Big[a(\varphi^n,I_h^*\varphi^k)-a(\varphi^k,I_h^*\varphi^n)\Big].
  \end{split}
\end{equation}
Thus, we complete the proof of this lemma.
\end{proof}

Applying Lemma \ref{lem3.1}, similar to the proof of Lemma \ref{lem3.7}, we can obtain the following identical relation.

\begin{lemma}\label{lem3.8}
Let $\{\varphi^n\}_{n=0}^\infty$ be a function sequence on $V_h$,
then the following relation holds
\begin{equation}\label{3.8}
   \begin{split}
     (\sum_{k=0}^n \tilde{b}_k^n \varphi^k,I_h^*\varphi^n)
   =\frac{1}{2}\Big[
   (\varphi^n,I_h^*\varphi^n)
   +\sum_{k=0}^{n-1} \tilde{b}_k^n (\varphi^k,I_h^*\varphi^k)
   -\sum_{k=0}^{n-1} \tilde{b}_k^n (\varphi^n-\varphi^k,I_h^*(\varphi^n-\varphi^k))
   \Big].
   \end{split}
\end{equation}
\end{lemma}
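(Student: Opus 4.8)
The plan is to follow the proof of Lemma \ref{lem3.7} almost verbatim, replacing the bilinear form $a(\cdot,I_h^*\cdot)$ by $(\cdot,I_h^*\cdot)$, and then to exploit the symmetry property \eqref{3.1} to collapse the antisymmetric remainder to zero. First I would use the two weight identities $\tilde{b}_n^n=1$ and $-\sum_{k=0}^{n-1}\tilde{b}_k^n=1$ to peel off the diagonal term, writing
\begin{equation*}
   \Big(\sum_{k=0}^n \tilde{b}_k^n \varphi^k, I_h^*\varphi^n\Big)
   =(\varphi^n,I_h^*\varphi^n)+\sum_{k=0}^{n-1}\tilde{b}_k^n(\varphi^k,I_h^*\varphi^n).
\end{equation*}
Then I would symmetrize the diagonal term by substituting one copy of $(\varphi^n,I_h^*\varphi^n)$ using $(\varphi^n,I_h^*\varphi^n)=-\sum_{k=0}^{n-1}\tilde{b}_k^n(\varphi^n,I_h^*\varphi^n)$, which recasts the whole expression as
\begin{equation*}
   \frac{1}{2}\Big[(\varphi^n,I_h^*\varphi^n)+\sum_{k=0}^{n-1}\tilde{b}_k^n\big(2(\varphi^k,I_h^*\varphi^n)-(\varphi^n,I_h^*\varphi^n)\big)\Big].
\end{equation*}

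The decisive step is the polarization identity
\begin{equation*}
   2(\varphi^k,I_h^*\varphi^n)-(\varphi^n,I_h^*\varphi^n)
   =(\varphi^k,I_h^*\varphi^k)-(\varphi^n-\varphi^k,I_h^*(\varphi^n-\varphi^k))
   +\big[(\varphi^k,I_h^*\varphi^n)-(\varphi^n,I_h^*\varphi^k)\big],
\end{equation*}
which I would verify simply by expanding $(\varphi^n-\varphi^k,I_h^*(\varphi^n-\varphi^k))$ into its four pairings and collecting terms. Substituting this into the bracketed sum produces exactly the three contributions appearing on the right-hand side of \eqref{3.8}, together with a cross term $\tfrac{1}{2}\sum_{k=0}^{n-1}\tilde{b}_k^n[(\varphi^k,I_h^*\varphi^n)-(\varphi^n,I_h^*\varphi^k)]$. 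This cross term is the single point where the argument diverges from Lemma \ref{lem3.7}: by the symmetry \eqref{3.1} of $(\cdot,I_h^*\cdot)$, each bracket $(\varphi^k,I_h^*\varphi^n)-(\varphi^n,I_h^*\varphi^k)$ vanishes identically, so the entire cross term drops out. This is precisely why \eqref{3.8} is cleaner than the conclusion of Lemma \ref{lem3.7}, where $a(\cdot,I_h^*\cdot)$ fails to be symmetric and the analogous cross term must be retained.

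I do not expect any genuine obstacle here, since the derivation is a purely linear rearrangement driven by the two weight identities and the polarization formula. The only item demanding care is sign and index bookkeeping: keeping the convention $-\sum_{k=0}^{n-1}\tilde{b}_k^n=1$ straight throughout, and treating the endpoint $k=0$ consistently given that $\tilde{b}_0^n$ has its own defining formula. Once the cross term is recognized to vanish through \eqref{3.1}, the identity \eqref{3.8} follows immediately, which is exactly the sense in which the statement is obtained \emph{similar to the proof of Lemma \ref{lem3.7}} by \emph{applying Lemma \ref{lem3.1}}.
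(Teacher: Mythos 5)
Your proposal is correct and is essentially the paper's own argument: the paper proves Lemma \ref{lem3.8} by the one-line remark ``Applying Lemma \ref{lem3.1}, similar to the proof of Lemma \ref{lem3.7},'' and your write-up is exactly that --- repeat the peel-off/symmetrization/polarization steps of Lemma \ref{lem3.7} for $(\cdot,I_h^*\cdot)$ and use the symmetry \eqref{3.1} to annihilate the cross term $\sum_{k=0}^{n-1}\tilde{b}_k^n\big[(\varphi^k,I_h^*\varphi^n)-(\varphi^n,I_h^*\varphi^k)\big]$. Your expansion of the polarization identity and the handling of the weights $\tilde{b}_n^n=1$, $-\sum_{k=0}^{n-1}\tilde{b}_k^n=1$ are both accurate, so nothing is missing.
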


\section{Existence, Uniqueness and Stability Analysis}\label{sec4}

In this section, we will give the existence, uniqueness and stability results for the fully discrete FVE scheme \eqref{2.8}.

\begin{theorem}\label{thm4.1}
There exists a unique solution for the fully discrete FVE scheme \eqref{2.8}.
\end{theorem}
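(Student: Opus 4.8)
The plan is to exploit the fact that, once the approximate solutions $u_h^0,\dots,u_h^{n-1}$ at the previous time levels are known, the scheme \eqref{2.8} (equivalently its rewritten form \eqref{2.9}) is a \emph{linear} system posed on the finite-dimensional space $V_h$, whose number of equations equals its number of unknowns $\dim V_h$. For a square linear system, existence and uniqueness of a solution are both equivalent to the assertion that the associated homogeneous system has only the trivial solution. First I would fix $n$, take the initial value $u_h^0$ to be a suitable interpolant or projection of $u_0$, and proceed by induction on $n$: assuming $u_h^0,\dots,u_h^{n-1}\in V_h$ are already uniquely determined, it suffices to show that $u_h^n$ is uniquely determined, which in turn reduces to the homogeneous problem.

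Next I would set the right-hand side of \eqref{2.9} to zero (that is, $f^n=0$ and $u_h^k=0$ for $k<n$) and let $w_h\in V_h$ denote any solution of the resulting homogeneous equation
\begin{equation*}
   \frac{1}{\Gamma(2-\alpha)}(w_h,I_h^*v_h)+\tau^{\alpha}a(w_h,I_h^*v_h)+\tau^{\alpha}(qw_h,I_h^*v_h)=0,\quad\forall\,v_h\in V_h.
\end{equation*}
Choosing the test function $v_h=w_h$ converts this into an energy identity. I would then invoke Lemma \ref{lem3.1} to bound the first term from below by $\tfrac{\mu_1}{\Gamma(2-\alpha)}\|w_h\|^2$, and Lemma \ref{lem3.3} to guarantee $a(w_h,I_h^*w_h)\geq\mu_6\|w_h\|_1^2\geq0$ for $0<h\leq h_0$, so that the diffusion term carries a favourable sign.

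The step that requires the most care — and which I expect to be the main obstacle — is the reaction term $(qw_h,I_h^*w_h)$, because $I_h^*$ is only a piecewise-constant interpolation and the $q$-weighted form $(\,\cdot\,,I_h^*\cdot\,)$ is not among those handled directly by Lemmas \ref{lem3.1}--\ref{lem3.3}. Using the characteristic-function structure of $I_h^*$ together with the hypothesis $q(\bm{x})\geq0$, I would argue that this term is nonnegative, the local contributions on each control volume inheriting positivity from the lumped form analysed in Lemma \ref{lem3.1}. Granting this, the energy identity forces $\tfrac{\mu_1}{\Gamma(2-\alpha)}\|w_h\|^2\leq0$, hence $w_h=0$. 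Therefore the homogeneous system admits only the trivial solution, the linear system defining $u_h^n$ is nonsingular, and the induction closes to yield the existence and uniqueness of $u_h^n$ for every $n=1,\dots,N$.
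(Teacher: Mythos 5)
Your proof is correct and takes essentially the same route as the paper: the paper writes \eqref{2.9} at each time level as a square linear system with coefficient matrix $G=\frac{1}{\Gamma(2-\alpha)}B_1+\tau^{\alpha}B_2+\tau^{\alpha}B_3$ and shows $Y^TGY>0$ for all $Y\neq\bm{0}$ via Lemmas \ref{lem3.1} and \ref{lem3.3}, which is precisely your step of testing the homogeneous equation with $v_h=w_h$ and invoking coercivity. The reaction term that you flag as the main obstacle is treated with no additional care in the paper either: the inequality $a(w_h,I_h^*w_h)+(qw_h,I_h^*w_h)\geq\mu_6\|w_h\|_1^2$ is simply asserted, implicitly taking $(qw_h,I_h^*w_h)\geq 0$ for granted.
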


\begin{proof}
Let $M_Z^0$ be the number of the vertices in $Z_h^0$,
and $\{\Phi_i : i=1,2\cdots,M_Z^0\}$ be the abbreviated basis functions of the space $V_h$,
then $u_h^n\in V_h$ can be expressed as follows
\begin{align*}
     u_h^n(\bm{x})=\sum_{i=1}^{M_Z^0} u_i^n\Phi_i(\bm{x}).
\end{align*}

Substituting the above expression into the FVE scheme \eqref{2.8} (or the equivalent formulation \eqref{2.9}),
and taking $v_h=\Phi_j$ $(j=1,2,\cdots,M_Z^0)$,
then \eqref{2.8} (or \eqref{2.9}) can be rewritten as the following matrix form: find $U^n$ such that

\begin{equation}\label{4.1p}
    \frac{1}{\Gamma(2-\alpha)}B_1 U^n+\tau^\alpha B_2 U^n+\tau^\alpha B_3 U^n=\tau^\alpha F^n-\frac{1}{\Gamma(2-\alpha)}\sum_{k=0}^{n-1}\tilde{b}_k^n B_1 U^k,
\end{equation}
where
\begin{equation*}
   \begin{array}{ll}
    U^n=(u_1^n,u_2^n,\cdots,u_{M_Z^0}^n)^T,                    &B_1=((\Phi_i,I_h^*\Phi_j))_{i,j=1,\cdots,M_Z^0},           \\[1.5mm]
    B_2=(a(\Phi_i,I_h^*\Phi_j))_{i,j=1,\cdots,M_Z^0},    &B_3=((q\Phi_i,I_h^*\Phi_j))_{i,j=1,\cdots,M_Z^0}, \\[1.5mm]
    F^n=((f(t_n),I_h^*\Phi_j))_{j=1,\cdots,M_Z^0}^T.   &
\end{array}
\end{equation*}

Making use of Lemma \ref{lem3.1},
we can easily obtain that the matrix $B_1$ is symmetric positive definite.
Let $G=\frac{1}{\Gamma(2-\alpha)}B_1+\tau^\alpha B_2+\tau^\alpha B_3$, then \eqref{4.1p} can be rewritten as follows
\begin{equation}\label{4.2p}
    G U^n=\tau^\alpha F^n-\frac{1}{\Gamma(2-\alpha)}\sum_{k=0}^{n-1}\tilde{b}_k^n B_1 U^k.
\end{equation}

Next, we will prove $G$ is invertible.
Applying Lemma \ref{lem3.3}, for $\forall Y=(y_1,y_2,\cdots,y_{M_Z^0})^T\in R^{M_Z^0}\setminus \{\bm 0\}$, we have
\begin{align*}
   Y^T(B_2+B_3)Y=a(w_h,I_h^*w_h)+(qw_h,I_h^*w_h)\geq \mu_6\|w_h\|_1^2>0,
\end{align*}
where $w_h=\sum_{i=1}^{M_Z^0} y_i\Phi_i\neq 0$.
This means that $Y^T(B_2+B_3)Y$ (for $Y\in R^{M_Z^0}$) is a positive definite quadratic form generated by nonsymmetric matrix $(B_2+B_3)$.
Therefore, $Y^T G Y$ (for $Y\in R^{M_Z^0}$) is a positive definite quadratic form generated by nonsymmetric matrix $G$,
then we have that $G$ is invertible.
In fact, if $G$ is noninvertible, then the homogeneous linear equations $GY=0$ has nonzero solution $Y_0$,
thus, we have $Y_0^TGY_0=0$ which is in contradiction with the definition of positive definite quadratic form.
Hence, $G$ is invertible, then the linear equations \eqref{4.1p} have a unique solution.
This shows that the fully discrete FVE scheme \eqref{2.8} has a unique solution.
Then, we complete the proof.
\end{proof}

Next, we give the stability analysis for the fully discrete FVE scheme \eqref{2.8}.

\begin{theorem}\label{thm4.2}
Let $\{u_h^n\}_{n=1}^N$ be the solution of the FVE system \eqref{2.8}, then there exists
a constant $C>0$ independent of $h$ and $\tau$ such that
\begin{align*}
     \|u_h^n\|\leq C(\|u_h^0\|+\sup_{t\in [0,T]}\|f(t)\|).
\end{align*}
Moreover, let $c_0>0$ be a constant,
then there exist constants $C>0$ and $\tau_0$ $(0<\tau_0<1)$ independent of $h$ and $\tau$ such that,
when $h\leq c_0\tau\leq c_0\tau_0$ and $h\leq h_1$, where $h_1=\min\{\frac{\mu_6}{\mu_5},1\}$, we have
\begin{align*}
     \|u_h^n\|_1 \leq Ce^{\frac{c_0T\mu_5}{2\mu_6}}(\|u_h^0\|_1+\sup_{t\in [0,T]}\|f(t)\|).
\end{align*}
\end{theorem}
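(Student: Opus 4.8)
The plan is to test the scheme \eqref{2.8} against two different choices of $v_h$ in order to generate an $L^2$ energy and an $H^1$ energy separately, and in each case to reduce the resulting inequality to the discrete fractional Gronwall inequality of Lemma \ref{lem3.6}. The two choices are $v_h=u_h^n$ for the $L^2$ bound and $v_h=D_\tau^\alpha u_h^n\in V_h$ for the $H^1$ bound; the second one is precisely what brings the non-symmetric bilinear form into the form treated by Lemma \ref{lem3.7}.

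For the $L^2$ estimate I would take $v_h=u_h^n$. The diffusion term $a(u_h^n,I_h^*u_h^n)\ge\mu_6\|u_h^n\|_1^2$ and the (nonnegative) reaction term are discarded, while the memory term is rewritten by Lemma \ref{lem3.8}. Since $\tilde b_k^n<0$ for $0\le k\le n-1$ and $(\cdot,I_h^*\cdot)\ge0$, the quadratic-difference sum in that identity is nonnegative and can be dropped, leaving
\[
\frac{\tau^{-\alpha}}{2\Gamma(2-\alpha)}\Big[E^n+\sum_{k=0}^{n-1}\tilde b_k^n E^k\Big]\le (f^n,I_h^*u_h^n),
\]
where $E^n:=(u_h^n,I_h^*u_h^n)\ge\mu_1\|u_h^n\|^2$ by Lemma \ref{lem3.1}. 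Bounding the right-hand side by $C\|f^n\|\,\|u_h^n\|$ and using $-\sum_{k=0}^{n-1}\tilde b_k^n=1$ puts this in the form of Lemma \ref{lem3.6} with $C_0=1$; the crucial point is that the factor $\tau^{-\alpha}$ produced by that lemma cancels the $\tau^\alpha$ sitting in front of the source, so no negative power of $\tau$ survives. Replacing $\|u_h^n\|$ by $\max_k\|u_h^k\|$ and closing the resulting quadratic inequality in that maximum then yields $\|u_h^n\|\le C(\|u_h^0\|+\sup_t\|f\|)$.

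For the $H^1$ estimate I would test with $v_h=D_\tau^\alpha u_h^n$, so that the diffusion term becomes $a(u_h^n,I_h^*(\sum_k\tilde b_k^n u_h^k))$ up to the factor $\tau^{-\alpha}/\Gamma(2-\alpha)$ and Lemma \ref{lem3.7} applies. Its symmetric part reproduces the memory structure now in $\mathcal{E}^n:=a(u_h^n,I_h^*u_h^n)\ge\mu_6\|u_h^n\|_1^2$ (again with a nonnegative difference sum to discard), while the coercive term $(D_\tau^\alpha u_h^n,I_h^*D_\tau^\alpha u_h^n)\ge\mu_1\|D_\tau^\alpha u_h^n\|^2$ is retained to absorb, by Young's inequality, both the source pairing $(f^n,I_h^*D_\tau^\alpha u_h^n)$ and the reaction pairing $(qu_h^n,I_h^*D_\tau^\alpha u_h^n)$; the leftover term $C\|u_h^n\|^2$ from the latter is controlled by the $L^2$ bound already proved. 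Multiplying through by $2\Gamma(2-\alpha)\tau^\alpha$ produces a recursion for $\mathcal{E}^n$ whose constant source term has size $C\tau^\alpha(\|u_h^0\|_1^2+\sup_t\|f\|^2)$.

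The main obstacle, exactly as the authors warn, is the non-symmetric remainder $\frac12\sum_{k=0}^{n-1}\tilde b_k^n[a(u_h^n,I_h^*u_h^k)-a(u_h^k,I_h^*u_h^n)]$ left over from Lemma \ref{lem3.7}: here, unlike the $L^2$ case, there is no cancellation. I would bound it using the skew-symmetry estimate of Lemma \ref{lem3.2}, namely $|a(v_h,I_h^*w_h)-a(w_h,I_h^*v_h)|\le\mu_5 h\|v_h\|_1\|w_h\|_1$, and convert the $H^1$ factors into $\mathcal{E}$ via $\|u_h^k\|_1^2\le\mathcal{E}^k/\mu_6$, distributing with $\sum_k|\tilde b_k^n|=1$. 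This inflates the memory multiplier to $C_0=1+O(\mu_5 h/\mu_6)$; the condition $h\le h_1=\min\{\mu_6/\mu_5,1\}$ keeps $C_0$ finite and $\ge1$, so Lemma \ref{lem3.6} gives $\mathcal{E}^n\le CC_0^{\,n}(\mathcal{E}^0+\tau^{-\alpha}\zeta)$, where again $\tau^{-\alpha}\zeta$ loses its negative $\tau$-power. Finally the coupling $h\le c_0\tau$ turns $C_0$ into $1+O(c_0\tau\mu_5/\mu_6)$, so that with $n\le N=T/\tau$ the $\tau$'s cancel in the exponent and $C_0^{\,n}\le e^{c_0 T\mu_5/(2\mu_6)}$; using $\mathcal{E}^0\le\mu_7\|u_h^0\|_1^2$ and $\mathcal{E}^n\ge\mu_6\|u_h^n\|_1^2$ then delivers the stated $H^1$ estimate, the smallness requirement $\tau\le\tau_0<1$ supplying the room needed for the Young absorptions.
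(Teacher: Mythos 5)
Your proposal is correct and follows essentially the same route as the paper's own proof: testing with $v_h=u_h^n$ and $v_h=D_{\tau}^{\alpha}u_h^n$, rewriting the memory terms via Lemmas \ref{lem3.8} and \ref{lem3.7}, bounding the non-symmetric remainder with Lemma \ref{lem3.2} so that Lemma \ref{lem3.6} applies with $C_0=\frac{1+\frac{\mu_5}{2\mu_6}h}{1-\frac{\mu_5}{2\mu_6}h}$, and using $h\leq c_0\tau$ together with the limit \eqref{4.15} to convert $C_0^n$ into the exponential factor. The only cosmetic deviation is in the $L^2$ step, where the paper retains the coercive diffusion term to absorb the Young-inequality piece $\frac{\mu_6}{2}\|u_h^n\|^2$, while you discard it and instead close a quadratic inequality in $\max_k\|u_h^k\|$; both are valid.
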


\begin{proof}
Taking $v_h=u_h^n$ in \eqref{2.8}, we can obtain
\begin{align}\label{4.1}
      (D_{\tau}^{\alpha} u_h^n,I_h^*u_h^n)+a(u_h^n,I_h^*u_h^n)+(qu_h^n,I_h^*u_h^n)=(f^n,I_h^*u_h^n).
\end{align}
Make use of Lemma \ref{lem3.1} and Lemma \ref{lem3.3}, apply the Young inequality to obtain
\begin{align}\label{4.2}
      (D_{\tau}^{\alpha} u_h^n,I_h^*u_h^n)+\mu_6\|u_h^n\|_1^2
      \leq C\|f^n\|^2+\frac{\mu_6}{2}\|u_h^n\|^2.
\end{align}
Applying Lemma \ref{lem3.8} to rewrite $(D_{\tau}^{\alpha} u_h^n,I_h^*u_h^n)$, we have
\begin{equation}\label{4.3}
    \begin{split}
      (D_{\tau}^{\alpha} u_h^n,I_h^*u_h^n)=
      &\frac{\tau^{-\alpha}}{2\Gamma(2-\alpha)}(u_h^n,I_h^*u_h^n)
      +\frac{\tau^{-\alpha}}{2\Gamma(2-\alpha)}
      \sum_{k=0}^{n-1}\tilde{b}_k^n(u_h^k,I_h^*u_h^k) \\
      &-\frac{\tau^{-\alpha}}{2\Gamma(2-\alpha)}
      \sum_{k=0}^{n-1}\tilde{b}_k^n(u_h^n-u_h^k,I_h^*(u_h^n-u_h^k)).
    \end{split}
\end{equation}
Substituting \eqref{4.3} into \eqref{4.2}, and taking note of $\tilde{b}_k^n<0$ $(0\leq k\leq n-1)$, we obtain
\begin{equation}\label{4.4}
    \begin{split}
      &(u_h^n,I_h^*u_h^n)+\tau^{\alpha}\Gamma(2-\alpha)\mu_6\|u_h^n\|_1^2\\
      &\quad\leq -\sum_{k=0}^{n-1}\tilde{b}_k^n(u_h^k,I_h^*u_h^k)
      +C\tau^{\alpha}\Gamma(2-\alpha)  \sup_{t\in [0,T]}\|f(t)\|^2 .
    \end{split}
\end{equation}
Applying Lemma \ref{lem3.6} in \eqref{4.4}, we have
\begin{equation}\label{4.5}
   \begin{split}
       (u_h^n,I_h^*u_h^n)
       \leq C((u_h^0,I_h^*u_h^0)+\sup_{t\in [0,T]}\|f(t)\|^2).
   \end{split}
\end{equation}
Thus, applying Lemma \ref{lem3.1} in \eqref{4.5}, we obtain the following result
\begin{align}\label{4.6}
      \|u_h^n\|^2\leq C(\|u_h^0\|^2+\sup_{t\in [0,T]}\|f(t)\|^2).
\end{align}

Next, choosing $v_h=D_{\tau}^{\alpha} u_h^n$ in \eqref{2.8}, we get
\begin{equation}\label{4.7}
   \begin{split}
      (D_{\tau}^{\alpha} u_h^n,I_h^*D_{\tau}^{\alpha} u_h^n)+a(u_h^n,I_h^*D_{\tau}^{\alpha} u_h^n)
      =-(qu_h^n,I_h^*D_{\tau}^{\alpha} u_h^n)+(f^n,I_h^*D_{\tau}^{\alpha} u_h^n).
   \end{split}
\end{equation}
Noting that
$(D_{\tau}^{\alpha} u_h^n,I_h^*D_{\tau}^{\alpha} u_h^n)\geq \mu_1\|D_{\tau}^{\alpha} u_h^n\|^2$,
and applying the Young inequality in \eqref{4.7}, we have
\begin{equation}\label{4.8}
   \begin{split}
      \mu_1\|D_{\tau}^{\alpha} u_h^n\|^2+a(u_h^n,I_h^*D_{\tau}^{\alpha} u_h^n)
      \leq C(\|u_h^n\|^2+\|f^n\|^2)+\frac{\mu_1}{2}\|D_{\tau}^{\alpha} u_h^n\|^2.
   \end{split}
\end{equation}
Apply Lemma \ref{lem3.7} in \eqref{4.8} to obtain
\begin{equation}\label{4.9}
    \begin{split}
      &\mu_1\|D_{\tau}^{\alpha} u_h^n\|^2
      +\frac{\tau^{-\alpha}}{2\Gamma(2-\alpha)}a(u_h^n,I_h^*u_h^n)
      +\frac{\tau^{-\alpha}}{2\Gamma(2-\alpha)}\sum_{k=0}^{n-1}\tilde{b}_k^n a(u_h^k,I_h^*u_h^k)\\
      &-\frac{\tau^{-\alpha}}{2\Gamma(2-\alpha)}\sum_{k=0}^{n-1}\tilde{b}_k^n a(u_h^n-u_h^k,I_h^*(u_h^n-u_h^k))\\
      &\quad\leq C(\|u_h^n\|^2+\|f^n\|^2)+\frac{\mu_1}{2}\|D_{\tau}^{\alpha} u_h^n\|^2\\
      &\quad\ \ \ -\frac{\tau^{-\alpha}}{2\Gamma(2-\alpha)} \sum_{k=0}^{n-1}\tilde{b}_k^n\Big[a(u_h^n,I_h^*u_h^k)-a(u_h^k,I_h^*u_h^n)\Big].
    \end{split}
\end{equation}
Making use of Lemma \ref{lem3.2} and Lemma \ref{lem3.3}, we have
\begin{equation}\label{4.10}
   \begin{split}
      |a(u_h^n,I_h^*u_h^k)-a(u_h^k,I_h^*u_h^n)|
      &\leq \mu_5 h \|u_h^n\|_1 \|u_h^k\|_1\\
      &\leq \frac{\mu_5}{2\mu_6}h [a(u_h^n,I_h^*u_h^n)+a(u_h^k,I_h^*u_h^k)].
   \end{split}
\end{equation}
Substituting \eqref{4.6} and \eqref{4.10} into \eqref{4.9},
multiplying \eqref{4.9} by $\tau^\alpha$, and noting that $\tilde{b}_k^n<0$ $(0\leq k\leq n-1)$,
we have
\begin{equation}\label{4.11}
    \begin{split}
       (1-\frac{\mu_5}{2\mu_6}h)a(u_h^n,I_h^*u_h^n)
       \leq -(1+\frac{\mu_5}{2\mu_6}h)\sum_{k=0}^{n-1} \tilde{b}_k^n a(u_h^k,I_h^*u_h^k)
       +C\tau^\alpha(\|u_h^0\|^2+\sup_{t\in [0,T]}\|f(t)\|^2).
   \end{split}
\end{equation}
Setting $h_1=\min\{\frac{\mu_6}{\mu_5},1\}$, when $h\leq h_1$, we have $(1-\frac{\mu_5}{2\mu_6}h)\geq \frac{1}{2}$ and
\begin{equation}\label{4.12}
    \begin{split}
       a(u_h^n,I_h^*u_h^n)
       \leq -\frac{1+\frac{\mu_5}{2\mu_6}h}{1-\frac{\mu_5}{2\mu_6}h}
       \sum_{k=0}^{n-1} \tilde{b}_k^n a(u_h^k,I_h^*u_h^k)
       +C\tau^\alpha(\|u_h^0\|^2+\sup_{t\in [0,T]}\|f(t)\|^2).
   \end{split}
\end{equation}
Applying Lemma \ref{lem3.6} in \eqref{4.12}, we have
\begin{equation}\label{4.13}
    \begin{split}
       a(u_h^n,I_h^*u_h^n)
       \leq C\Big(\frac{1+\frac{\mu_5}{2\mu_6}h}{1-\frac{\mu_5}{2\mu_6}h}\Big)^n
       (a(u_h^0,I_h^*u_h^0)+\|u_h^0\|^2+\sup_{t\in [0,T]}\|f(t)\|^2).
   \end{split}
\end{equation}
Let $c_0>0$ be a constant. Selecting $h$ and $\tau$ to satisfy $h\leq c_0\tau$ in \eqref{4.13}, and applying Lemma \ref{lem3.3}, we have
\begin{equation}\label{4.14}
    \begin{split}
       \|u_h^n\|_1^2
       \leq C\Big(\frac{1+\frac{c_0\mu_5}{2\mu_6}\tau}{1-\frac{c_0\mu_5}{2\mu_6}\tau}\Big)^{\frac{T}{\tau}}
       (\|u_h^0\|_1^2+\sup_{t\in [0,T]}\|f(t)\|^2).
   \end{split}
\end{equation}
Note that
\begin{align}\label{4.15}
   \lim_{\tau\rightarrow 0} \Big(\frac{1+\frac{c_0\mu_5}{2\mu_6}\tau}{1-\frac{c_0\mu_5}{2\mu_6}\tau}\Big)^{\frac{T}{\tau}}
   =e^{\frac{c_0T\mu_5}{\mu_6}},
\end{align}
then, there exists a constant $\tau_0$ $(0<\tau_0<1)$, when $\tau\leq\tau_0$, it follows that
\begin{equation}\label{4.16}
    \begin{split}
       \|u_h^n\|_1^2
       \leq Ce^{\frac{c_0T\mu_5}{\mu_6}}(\|u_h^0\|_1^2+\sup_{t\in [0,T]}\|f(t)\|^2).
   \end{split}
\end{equation}
Thus, we complete the proof of the stability.
\end{proof}

\begin{remark}\label{rmk4.1}
   From Theorem \ref{thm4.2}, it is easy to see that $\|u_h^n\|$ is unconditionally stable,
   and $\|u_h^n\|_1$ is stable when the conditions in the Theorem \eqref{thm4.2} are established,
   because the bilinear $a(\dot,I_h^*\dot)$ does not necessarily satisfy symmetry.
   When the coefficient $\mathcal{A}(\bm{x})$ is a symmetry and positive definite constant matrix,
   following Reference \cite{Ewing-Lazarov-Lin},
   we can know that $a(\dot,I_h^*\dot)$ satisfy symmetry.
   And under this condition, we can obtain that $\|u_h^n\|_1$ is also unconditionally stable.
\end{remark}

\section{\textit{A Priori} Error Estimates}

In order to obtain the error estimates for the fully discrete FVE scheme \eqref{2.8},
we introduce an elliptic projection operator $P_h: H_0^1(\Omega)\cap H^2(\Omega)\rightarrow V_h$,
which is defined by the following
\begin{align}\label{5.1}
      a(u-P_hu,I_h^*v_h)=0, \ \forall v_h\in V_h.
\end{align}
Following References \cite{Li-Chen-Wu}, the above projection operator $P_h$ satisfies the following estimates.
\begin{lemma}\label{lem5.1}
There exists a constant $C>0$ independent of $h$ and $\tau$ such that
\begin{align}
      &\|u-P_hu\|_1\leq Ch|u|_2,\ \forall u\in H_0^1(\Omega)\cap H^2(\Omega),\label{5.2}\\
      &\|u-P_hu\|\leq Ch^2\|u\|_{3,p},\ \forall u\in H_0^1(\Omega)\cap W^{3,p}(\Omega),\ p>1.\label{5.3}
\end{align}
\end{lemma}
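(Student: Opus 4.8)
The plan is to treat the two estimates separately: the $H^1$ bound \eqref{5.2} follows from a C\'ea-type quasi-optimality argument, while the $L^2$ bound \eqref{5.3} requires an Aubin--Nitsche duality argument in which the non-symmetry of $a(\cdot,I_h^*\cdot)$ is the key complication.

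For \eqref{5.2}, I would start from the coercivity and continuity of $a(\cdot,I_h^*\cdot)$ in Lemma \ref{lem3.3}. Writing $e=u-P_hu$ and choosing any $v_h\in V_h$, the difference $P_hu-v_h$ lies in $V_h$, so
\begin{align*}
   \mu_6\|P_hu-v_h\|_1^2
   &\leq a(P_hu-v_h,I_h^*(P_hu-v_h))\\
   &=a(P_hu-u,I_h^*(P_hu-v_h))+a(u-v_h,I_h^*(P_hu-v_h)).
\end{align*}
The first term vanishes by the defining orthogonality \eqref{5.1}, and the second is bounded by $\mu_7\|u-v_h\|_1\|P_hu-v_h\|_1$. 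Dividing yields $\|P_hu-v_h\|_1\leq(\mu_7/\mu_6)\|u-v_h\|_1$, and the triangle inequality gives $\|u-P_hu\|_1\leq(1+\mu_7/\mu_6)\|u-v_h\|_1$. Choosing $v_h=I_hu$ and invoking the interpolation estimate \eqref{2.1} in its seminorm form produces \eqref{5.2}.

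For \eqref{5.3}, I would use duality. Since $\Omega$ is convex and $\mathcal{A}$ is symmetric, the dual problem $-\div(\mathcal{A}\nabla\psi)=e$ in $\Omega$, $\psi|_{\partial\Omega}=0$, has a unique solution $\psi\in H_0^1(\Omega)\cap H^2(\Omega)$ with $\|\psi\|_2\leq C\|e\|$ by elliptic regularity. Writing $a$ for the continuous (hence symmetric) form and noting that \eqref{5.1} gives $a(e,I_h^*I_h\psi)=0$, I would write
\begin{align*}
   \|e\|^2=(e,e)=a(e,\psi)
   =a(e,\psi-I_h\psi)+\big[a(e,I_h\psi)-a(e,I_h^*I_h\psi)\big].
\end{align*}
The first term is bounded by $C\|e\|_1\|\psi-I_h\psi\|_1\leq Ch|u|_2\cdot Ch\|\psi\|_2$, using \eqref{5.2} for $e$ and \eqref{2.1} for $\psi$, hence is of order $h^2\|e\|$. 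The bracket is the finite volume \emph{consistency error}; decomposing $e=(u-I_hu)+(I_hu-P_hu)$ with $I_hu-P_hu\in V_h$, I would bound its $V_h$ part through the $b_h$ estimate and the asymmetry estimate of Lemma \ref{lem3.2}, each carrying a factor $h$, and control the interpolation part $u-I_hu$ via \eqref{2.1} together with $\|w-I_h^*w\|\leq Ch\|w\|_1$ from \eqref{2.2}.

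The main obstacle will be precisely this bracket. In the standard Galerkin setting the $L^2$ estimate is immediate from Galerkin orthogonality, but here both the non-symmetry of $a(\cdot,I_h^*\cdot)$ and the gap between $a(\cdot,\cdot)$ and $a(\cdot,I_h^*\cdot)$ generate consistency terms, quantified by the factors $\mu_5h$ in Lemma \ref{lem3.2} and by \eqref{2.2}. Collecting all contributions yields $\|e\|^2\leq Ch^2\|u\|_{3,p}\|e\|$; the extra regularity $u\in W^{3,p}$ beyond $H^2$ is exactly what is needed to raise the finite volume consistency error to the optimal order $h^2$, and dividing by $\|e\|$ gives \eqref{5.3}.
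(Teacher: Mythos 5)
First, a point of comparison: the paper itself gives no proof of Lemma \ref{lem5.1} at all --- it is quoted from \cite{Li-Chen-Wu} --- so your sketch has to be measured against the standard finite volume element argument in that reference. Your proof of \eqref{5.2} contains a genuine error, not just a gap. The continuity bound $a(v_h,I_h^*w_h)\leq\mu_7\|v_h\|_1\|w_h\|_1$ of Lemma \ref{lem3.3} is stated, and is only true, for $v_h,w_h\in V_h$; in your C\'ea step you apply it with first argument $u-v_h\notin V_h$. For such an argument, $a(\cdot,I_h^*\cdot)$ must be read through the flux-sum line of \eqref{2.5}, which involves line integrals of $\mathcal{A}\nabla u\cdot n$ over control-volume boundaries, and no bound of the form $C\|u-v_h\|_1\,\|P_hu-v_h\|_1$ with $C$ independent of $u$ can hold: in one dimension with $\mathcal{A}=1$ the flux form reduces, after summation by parts, to $\sum_i h\,w'(x_{i+1/2})\,v_h'|_{(x_i,x_{i+1})}$ with $w=u-v_h$, and midpoint values of $w'$ are not controlled by $\|w\|_1$ (a smooth spike in $w'$ of height $\delta^{-1/2}$ and width $\delta\ll h$ keeps $\|w\|_1$ bounded while the form blows up as $\delta\to 0$). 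So C\'ea quasi-optimality is simply not available for this nonconforming pairing. The standard substitute --- and the actual content of the $H^1$ estimate in \cite{Li-Chen-Wu} --- is the consistency estimate $|a(u-I_hu,I_h^*w_h)|\leq Ch\|u\|_2\|w_h\|_1$ for $w_h\in V_h$, proved by elementwise Green's formula and trace inequalities; coercivity applied to $\chi=I_hu-P_hu$ together with \eqref{2.1} then yields \eqref{5.2}.

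Your treatment of \eqref{5.3} has the correct duality skeleton (convexity of $\Omega$, $\|\psi\|_2\leq C\|e\|$, the splitting into $a(e,\psi-I_h\psi)$ plus the bracket, and the vanishing of $a(e,I_h^*I_h\psi)$ by \eqref{5.1}), but the bracket --- which you rightly call the main obstacle --- is never actually estimated, and the tools you name cannot estimate it. The $b_h$ and asymmetry bounds of Lemma \ref{lem3.2} again require both arguments in $V_h$, so they reach only the part $\chi=I_hu-P_hu$ of $e$; for the part $\eta=u-I_hu$, the two quantities $a(\eta,I_h\psi)$ and $a(\eta,I_h^*I_h\psi)$ are given by structurally different formulas (a volume integral versus flux sums over control-volume boundaries), so one cannot ``subtract the second arguments'' and invoke \eqref{2.2}. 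Comparing them requires elementwise integration by parts, and the resulting interface and quadrature terms are exactly where the $W^{3,p}$ regularity must be spent: it is known that $u\in H^2$ alone does not yield an $O(h^2)$ bound in $L^2$ for the FVE projection, so any argument that never genuinely uses third derivatives of $u$ cannot be correct. In short, your sketch reduces the lemma to precisely the consistency estimates that constitute its whole content and leaves them unproved --- which is presumably why the paper cites \cite{Li-Chen-Wu} rather than proving the lemma.
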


Next, we give the main results in this paper about the error estimates.

\begin{theorem}\label{thm5.1}
Let $u$ and $u_h^n$ be the solutions of system \eqref{2.0} and the FVE scheme \eqref{2.8}, respectively.
Assume that $u_h^0=P_hu_0$, then there exists a constant $C>0$ independent of $h$ and $\tau$ such that
\begin{align}
      &\max_{1\leq n\leq N}\|u(t_n)-u_h^n\|\leq C(h^2+\tau^{2-\alpha}),\label{5.4}\\
      &\max_{1\leq n\leq N}\|u(t_n)-u_h^n\|_1\leq C(h+\tau^{-\frac{\alpha}{2}}(h^2+\tau^{2-\alpha})).\label{5.5}
\end{align}
\end{theorem}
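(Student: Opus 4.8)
The plan is to split the error through the elliptic projection $P_h$ of \eqref{5.1}. I write $\rho^n = u(t_n)-P_hu(t_n)$ and $\theta^n = P_hu(t_n)-u_h^n$, so that $u(t_n)-u_h^n=\rho^n+\theta^n$. The projection part is immediate from Lemma \ref{lem5.1}: $\|\rho^n\|\le Ch^2$ by \eqref{5.3} and $\|\rho^n\|_1\le Ch$ by \eqref{5.2}. Since the assumption $u_h^0=P_hu_0$ forces $\theta^0=0$, the whole problem reduces to estimating $\theta^n\in V_h$ in the $L^2$- and $H^1$-norms, after which the triangle inequality yields \eqref{5.4} and \eqref{5.5}.

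Next I would derive the error equation for $\theta^n$. Taking the weak form \eqref{2.4} at $t=t_n$, substituting the $L1$ expansion ${}_0^CD_t^\alpha u^n=D_\tau^\alpha u^n+R_t^n$ from \eqref{2.6}, and subtracting the scheme \eqref{2.8}, the defining identity \eqref{5.1} annihilates $a(\rho^n,I_h^*v_h)$ and leaves
\[
 (D_\tau^\alpha\theta^n,I_h^*v_h)+a(\theta^n,I_h^*v_h)+(q\theta^n,I_h^*v_h)=(G^n,I_h^*v_h),\qquad G^n:=-\bigl(R_t^n+D_\tau^\alpha\rho^n+q\rho^n\bigr).
\]
The data $G^n$ is controlled by Lemma \ref{lem3.4} ($\|R_t^n\|\le C\tau^{2-\alpha}$), by Lemma \ref{lem5.1} ($\|q\rho^n\|\le Ch^2$), and, since $P_h$ is time-independent and hence commutes with $D_\tau^\alpha$, by $\|D_\tau^\alpha\rho^n\|\le Ch^2$ up to a higher-order truncation remainder; altogether $\|G^n\|\le C(h^2+\tau^{2-\alpha})$.

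For \eqref{5.4} I would take $v_h=\theta^n$ and mirror the first half of the proof of Theorem \ref{thm4.2}: rewrite $(D_\tau^\alpha\theta^n,I_h^*\theta^n)$ by Lemma \ref{lem3.8} as in \eqref{4.3}, discard the nonnegative telescoped remainder (recall $\tilde{b}_k^n<0$), apply the coercivity of Lemma \ref{lem3.3} together with a Young inequality on $(G^n,I_h^*\theta^n)$, and reach a recursion $(\theta^n,I_h^*\theta^n)\le-\sum_{k=0}^{n-1}\tilde{b}_k^n(\theta^k,I_h^*\theta^k)+C\tau^\alpha\|G^n\|^2$; Lemma \ref{lem3.6} with $C_0=1$ and $\theta^0=0$ then give $\|\theta^n\|\le C(h^2+\tau^{2-\alpha})$. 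For \eqref{5.5} I would take $v_h=D_\tau^\alpha\theta^n$ and follow the second half of that proof: use $(D_\tau^\alpha\theta^n,I_h^*D_\tau^\alpha\theta^n)\ge\mu_1\|D_\tau^\alpha\theta^n\|^2$, reorganize $a(\theta^n,I_h^*D_\tau^\alpha\theta^n)$ by Lemma \ref{lem3.7}, and absorb the skew remainder $a(\theta^n,I_h^*\theta^k)-a(\theta^k,I_h^*\theta^n)$ via the $O(h)$ bound of Lemma \ref{lem3.2} exactly as in \eqref{4.10}. Under $h\le h_1$ this produces a recursion of the form \eqref{b1} for $a(\theta^n,I_h^*\theta^n)$, with $C_0=(1+\tfrac{\mu_5}{2\mu_6}h)/(1-\tfrac{\mu_5}{2\mu_6}h)$ and a source built from $\|\theta^n\|^2$ (already bounded) and $\|G^n\|^2$; Lemma \ref{lem3.6}, whose conclusion \eqref{b2} carries the factor $\tau^{-\alpha}$, then gives $\|\theta^n\|_1\le C\tau^{-\alpha/2}(h^2+\tau^{2-\alpha})$, and with $\|\rho^n\|_1\le Ch$ this is \eqref{5.5}.

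The hard part is the $H^1$ estimate. Because $a(\cdot,I_h^*\cdot)$ is not symmetric, the energy argument closes only after the skew part is controlled by the $O(h)$ estimate of Lemma \ref{lem3.2}, which is exactly what forces the mesh restriction $h\le h_1$ and the use of the Gr\"onwall-type Lemma \ref{lem3.6} with $C_0>1$ in place of the sharper Lemma \ref{lem3.5}. The accompanying loss of a factor $\tau^{-\alpha/2}$ in \eqref{5.5} is the price of routing the consistency data through the $\tau^{-\alpha}$-weighted bound \eqref{b2} rather than absorbing it against the coercive term $\|D_\tau^\alpha\theta^n\|^2$; a secondary technical point is that the fractional difference $D_\tau^\alpha\rho^n$ of the projection error must be estimated at full order $O(h^2)$, for which the commutation of $P_h$ with the time discretization is essential.
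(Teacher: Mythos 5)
Your decomposition through $P_h$, the error equation for $\theta^n$, the consistency bounds on $R_t^n$, $\rho^n$, $D_\tau^\alpha\rho^n$, and your treatment of the $L^2$ bound \eqref{5.4} all match the paper's proof (using Lemma \ref{lem3.6} with $C_0=1$ in place of Lemma \ref{lem3.5} is immaterial). The gap is in your proof of \eqref{5.5}. Theorem \ref{thm5.1} is unconditional: it assumes no relation between $h$ and $\tau$ and no mesh restriction. Your route for the $H^1$ bound --- testing with $v_h=D_\tau^\alpha\theta^n$, reorganizing via Lemma \ref{lem3.7}, absorbing the skew part by Lemma \ref{lem3.2}, and invoking Lemma \ref{lem3.6} with $C_0=\bigl(1+\tfrac{\mu_5}{2\mu_6}h\bigr)/\bigl(1-\tfrac{\mu_5}{2\mu_6}h\bigr)>1$ --- produces the prefactor $C_0^n$ from \eqref{b3}--\eqref{b2}, and $C_0^n\le C_0^{T/\tau}\approx\exp\bigl(\tfrac{\mu_5 hT}{\mu_6\tau}\bigr)$ is bounded independently of $h$ and $\tau$ only under a condition of the type $h\le c_0\tau$. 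That is exactly why the paper presents this argument as a separate result, Theorem \ref{thm5.2}, with the hypotheses $h\le c_0\tau\le c_0\tau_0$ and $h\le h_1$; it cannot deliver \eqref{5.5} as stated. Moreover, your claimed conclusion is inconsistent with your own route: applying Lemma \ref{lem3.6} to the recursion for $a(\theta^n,I_h^*\theta^n)$ with source $\zeta=C\tau^\alpha(h^4+\tau^{2(2-\alpha)}+\tau^4)$ gives $a(\theta^n,I_h^*\theta^n)\le CC_0^n(h^4+\tau^{2(2-\alpha)}+\tau^4)$ --- the $\tau^{-\alpha}$ in \eqref{b2} is cancelled by the $\tau^\alpha$ in $\zeta$ --- so you would obtain $\|\theta^n\|_1\le CC_0^{n/2}(h^2+\tau^{2-\alpha})$, i.e.\ Theorem \ref{thm5.2}'s bound with its exponential factor, not the factor $\tau^{-\alpha/2}$ appearing in \eqref{5.5}.

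What you missed is that \eqref{5.5} costs nothing beyond the $L^2$ argument: it falls out of the same energy inequality. In the paper, after testing with $v_h=\theta^n$, the coercive term is retained on the left-hand side, yielding \eqref{5.13}:
\[
(\theta^n,I_h^*\theta^n)+\tau^{\alpha}\Gamma(2-\alpha)\mu_6\|\theta^n\|_1^2
\le -\sum_{k=0}^{n-1}\tilde{b}_k^n(\theta^k,I_h^*\theta^k)+C\tau^{\alpha}\bigl(h^4+\tau^{2(2-\alpha)}+\tau^4\bigr).
\]
Lemma \ref{lem3.5} applied to $\varphi^n=(\theta^n,I_h^*\theta^n)$ (with $\theta^0=0$) bounds the right-hand side by $C(h^4+\tau^{2(2-\alpha)}+\tau^4)$, and since the term $\tau^{\alpha}\Gamma(2-\alpha)\mu_6\|\theta^n\|_1^2$ was never discarded, \eqref{5.14}--\eqref{5.15} give simultaneously $\mu_1\|\theta^n\|^2\le C(h^4+\tau^{2(2-\alpha)}+\tau^4)$ and $\tau^{\alpha}\Gamma(2-\alpha)\mu_6\|\theta^n\|_1^2\le C(h^4+\tau^{2(2-\alpha)}+\tau^4)$; dividing the latter by $\tau^{\alpha}$ and taking square roots produces exactly the $\tau^{-\alpha/2}(h^2+\tau^{2-\alpha})$ of \eqref{5.5}, with no symmetry issue, no use of Lemmas \ref{lem3.2} or \ref{lem3.7}, and no mesh condition. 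The nonsymmetric machinery you deployed belongs to Theorem \ref{thm5.2}, whose purpose is to remove the $\tau^{-\alpha/2}$ loss at the price of precisely the conditions you were forced to invoke.
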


\begin{proof}
we write $u(t_n)-u_h^n=\rho^n+\theta^n$,
where $\rho^n=u(t_n)-P_hu(t_n)$, $\theta^n=P_hu(t_n)-u_h^n$,
and $P_h$ is the elliptic projection operator defined in \eqref{5.1}.
According to Lemma \ref{lem5.1}, we only need to estimate $\theta^n$.
Making using of our definitions, we can have the error equation for $\theta^n$ as follows
\begin{equation}\label{5.6}
    \begin{split}
      &(D_{\tau}^{\alpha}\theta^n,I_h^*v_h)+a(\theta^n,I_h^*v_h)+(p\theta^n,I_h^*v_h)\\
      &\quad=-(D_{\tau}^{\alpha}\rho^n,I_h^*v_h)
      -(q\rho^n,I_h^*v_h)-(R_t^n(\bm{x}),I_h^*v_h),\ \forall v_h\in V_h.
    \end{split}
\end{equation}

Now, choosing $v_h=\theta^n$ in \eqref{5.6}, we obtain
\begin{equation}\label{5.7}
    \begin{split}
       &(D_{\tau}^{\alpha}\theta^n,I_h^*\theta^n)+a(\theta^n,I_h^*\theta^n)
       +(q\theta^n,I_h^*\theta^n)\\
       &\quad=-(D_{\tau}^{\alpha}\rho^n,I_h^*\theta^n)-(q\rho^n,I_h^*\theta^n)-(R_t^n(\bm{x}),I_h^*\theta^n).
    \end{split}
\end{equation}
By virtue of Lemma \ref{lem3.1} and the Young inequality in \eqref{5.7}, we have
\begin{equation}\label{5.8}
    \begin{split}
      (D_{\tau}^{\alpha}\theta^n,I_h^*\theta^n)+a(\theta^n,I_h^*\theta^n)
      \leq C(\|D_{\tau}^{\alpha}\rho^n\|^2+\|\rho^n\|^2+\|R_t^n(\bm{x})\|^2)
      +\frac{\mu_6}{2}\|\theta^n\|^2.
    \end{split}
\end{equation}
Apply Lemma \ref{lem3.3} and Lemma \ref{lem3.8} in \eqref{5.8} to obtain
\begin{equation}\label{5.9}
    \begin{split}
       &\frac{\tau^{-\alpha}}{2\Gamma(2-\alpha)}(\theta^n,I_h^*\theta^n)
      +\frac{\tau^{-\alpha}}{2\Gamma(2-\alpha)}
      \sum_{k=0}^{n-1}\tilde{b}_k^n(\theta^k,I_h^*\theta^k) \\
      &-\frac{\tau^{-\alpha}}{2\Gamma(2-\alpha)}
      \sum_{k=0}^{n-1}\tilde{b}_k^n(\theta^n-\theta^k,I_h^*(\theta^n-\theta^k))
      +\frac{\mu_6}{2}\|\theta^n\|_1^2\\
      &\quad\leq C(\|D_{\tau}^{\alpha}\rho^n\|^2+\|\rho^n\|^2+\|R_t^n(\bm{x})\|^2).
    \end{split}
\end{equation}
Multiplying \eqref{5.9} by $2\Gamma(2-\alpha)\tau^\alpha$, and noting that
$-\sum_{k=0}^{n-1}\tilde{b}_k^n(\theta^n-\theta^k,I_h^*(\theta^n-\theta^k))\geq 0$,
we have
\begin{equation}\label{5.10}
    \begin{split}
      &(\theta^n,I_h^*\theta^n)+\tau^{\alpha}\Gamma(2-\alpha)\mu_6\|\theta^n\|_1^2\\
      &\quad\leq -\sum_{k=0}^{n-1}\tilde{b}_k^n(\theta^k,I_h^*\theta^k)
      +C\tau^{\alpha}\Gamma(2-\alpha)(\|D_{\tau}^{\alpha}\rho^n\|^2+\|\rho^n\|^2+\|R_t^n(\bm{x})\|^2).
    \end{split}
\end{equation}
In order to estimate $\|D_{\tau}^{\alpha}\rho^n\|^2$ and $\|\rho^n\|^2$ in \eqref{5.10}, we apply Lemma \ref{lem5.1} to obtain
\begin{equation}\label{5.11}
   \begin{split}
      &\|\rho^n\|^2=\|u(t_n)-P_hu(t_n)\|^2\leq C h^4\|u(t_n)\|_{W^{3,p}}\leq Ch^4\|u\|_{L^\infty(W^{3,p})},\\
      &\|\partial _t \rho^{n-k}\|=\|\frac{1}{\tau}\int_{t_{n-k-1}}^{t_{n-k}}\rho_t \d t\|
      \leq h^2 \|u_t\|_{L^\infty(W^{3,p})},\ p>1.
   \end{split}
\end{equation}
Noting that $D_{\tau}^{\alpha}\rho^n=\frac{\tau^{1-\alpha}}{\Gamma(2-\alpha)}\sum_{k=0}^{n-1}b_k\partial _t \rho^{n-k}$
and $\sum_{k=0}^{n-1}b_k=n^{1-\alpha}$, we have
\begin{align}\label{5.12}
   \|D_{\tau}^{\alpha}\rho^n\|^2
   \leq \Big(\frac{\tau^{1-\alpha}}{\Gamma(2-\alpha)}n^{1-\alpha}h^2 \|u_t\|_{L^\infty(W^{3,p})}\Big)^2
   \leq CT^{2-2\alpha}h^4 \|u_t\|_{L^\infty(W^{3,p})}^2,\ p>1.
\end{align}
Substituting \eqref{5.11} and \eqref{5.12} into \eqref{5.10}, and applying Lemma \ref{lem3.4}, we have
\begin{equation}\label{5.13}
    \begin{split}
      &(\theta^n,I_h^*\theta^n)+\tau^{\alpha}\Gamma(2-\alpha)\mu_6\|\theta^n\|_1^2\\
     &\quad\leq -\sum_{k=0}^{n-1}\tilde{b}_k^n(\theta^k,I_h^*\theta^k)
      +C\tau^{\alpha}(h^4+\tau^{2(2-\alpha)}+\tau^4).
    \end{split}
\end{equation}
Note that $\theta^0=0$, apply Lemma \ref{lem3.5} in \eqref{5.13} to obtain
\begin{equation}\label{5.14}
    \begin{split}
      (\theta^n,I_h^*\theta^n)+\tau^{\alpha}\Gamma(2-\alpha)\mu_6\|\theta^n\|_1^2
       \leq C(h^4+\tau^{2(2-\alpha)}+\tau^4).
    \end{split}
\end{equation}
By virtue of Lemma \ref{lem3.1}, we have
\begin{equation}\label{5.15}
    \begin{split}
      \mu_1\|\theta^n\|^2+\tau^{\alpha}\Gamma(2-\alpha)\mu_6\|\theta^n\|_1^2
      \leq C(h^4+\tau^{2(2-\alpha)}+\tau^4).
    \end{split}
\end{equation}
Finally, apply Lemma \ref{lem5.1} with \eqref{5.15} to complete the proof.
\end{proof}

\begin{remark}\label{rmk5.1}
   From Theorem \ref{thm5.1},
we can see that $\|u(t_n)-u_h^n\|$ has been given the optimal \textit{a priori} error estimate based on $L1$-formula,
and the estimate in time for $\|u(t_n)-u_h^n\|_1$ reduces $\frac{\alpha}{2}$ by comparing with the optimal estimate.
So we need to find and use other estimate methods.
\end{remark}

Next, we try to give a better estimate for $\|u(t_n)-u_h^n\|_1$.

\begin{theorem}\label{thm5.2}
Let $u$ and $u_h^n$ be the solutions of system \eqref{2.0} and the FVE scheme \eqref{2.8}, respectively.
Assume that $u_h^0=P_hu_0$.
Let $c_0>0$ be a constant,
then there exist two constants $C>0$ and $\tau_0$ $(0<\tau_0<1)$ independent of $h$ and $\tau$ such that,
when $h\leq c_0\tau\leq c_0\tau_0$ and $h\leq h_1$, where $h_1=\min\{\frac{\mu_6}{\mu_5},1\}$, we have
\begin{align*}
      \max_{1\leq n\leq N}\|u(t_n)-u_h^n\|_1\leq C(h+e^{\frac{c_0T\mu_5}{2\mu_6}}(h^2+\tau^{2-\alpha})).
\end{align*}
\end{theorem}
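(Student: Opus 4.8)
The plan is to mirror the structure of the $H^1$-stability argument in Theorem~\ref{thm4.2}, but now applied to the error equation \eqref{5.6} for $\theta^n = P_h u(t_n) - u_h^n$. Writing $u(t_n) - u_h^n = \rho^n + \theta^n$ as before, Lemma~\ref{lem5.1} gives $\|\rho^n\|_1 \leq Ch|u|_2 \leq Ch$, so it suffices to control $\|\theta^n\|_1$, and by the coercivity in Lemma~\ref{lem3.3} this reduces to bounding the quantity $a(\theta^n, I_h^*\theta^n)$.

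To reach such a bound, I would test \eqref{5.6} with $v_h = D_{\tau}^{\alpha}\theta^n$ (exactly the choice that produced the $H^1$-stability estimate). Using $(D_{\tau}^{\alpha}\theta^n, I_h^* D_{\tau}^{\alpha}\theta^n) \geq \mu_1\|D_{\tau}^{\alpha}\theta^n\|^2$ together with the Young inequality on the right-hand side to absorb the $\|D_{\tau}^{\alpha}\theta^n\|$ contributions, the problem comes down to analysing $a(\theta^n, I_h^* D_{\tau}^{\alpha}\theta^n)$. Recalling $D_{\tau}^{\alpha}\theta^n = \frac{\tau^{-\alpha}}{\Gamma(2-\alpha)}\sum_{k=0}^n \tilde b_k^n\theta^k$, I would invoke Lemma~\ref{lem3.7} to split this into a symmetric, telescoping part and an antisymmetric remainder $\frac{\tau^{-\alpha}}{2\Gamma(2-\alpha)}\sum_{k=0}^{n-1}\tilde b_k^n[a(\theta^n, I_h^*\theta^k) - a(\theta^k, I_h^*\theta^n)]$.

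The heart of the argument --- and the step I expect to be the main obstacle --- is handling this antisymmetric remainder, precisely because $a(\cdot, I_h^*\cdot)$ is not symmetric. Here I would use the estimate $|a(\theta^n, I_h^*\theta^k) - a(\theta^k, I_h^*\theta^n)| \leq \mu_5 h\|\theta^n\|_1\|\theta^k\|_1$ from Lemma~\ref{lem3.2} and convert it, as in \eqref{4.10}, into $\frac{\mu_5}{2\mu_6}h[a(\theta^n, I_h^*\theta^n) + a(\theta^k, I_h^*\theta^k)]$ via Lemma~\ref{lem3.3}. After multiplying through by $\tau^\alpha$, discarding the nonnegative difference term, and noting $\tilde b_k^n < 0$, this produces a recursion of the form
\begin{equation*}
  \Big(1 - \tfrac{\mu_5}{2\mu_6}h\Big)\,a(\theta^n, I_h^*\theta^n)
  \leq -\Big(1 + \tfrac{\mu_5}{2\mu_6}h\Big)\sum_{k=0}^{n-1}\tilde b_k^n\, a(\theta^k, I_h^*\theta^k) + \zeta,
\end{equation*}
with forcing $\zeta = C\tau^\alpha(\|D_{\tau}^{\alpha}\rho^n\|^2 + \|\rho^n\|^2 + \|R_t^n\|^2)$. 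Imposing $h \leq h_1$ keeps the left factor at least $\tfrac12$, so setting $C_0 = (1 + \frac{\mu_5}{2\mu_6}h)/(1 - \frac{\mu_5}{2\mu_6}h) \geq 1$ puts the recursion in the exact form required by Lemma~\ref{lem3.6}.

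Finally, I would close the estimate using the ingredients already assembled in the proof of Theorem~\ref{thm5.1}: $\|\rho^n\|^2 \leq Ch^4$ and $\|D_{\tau}^{\alpha}\rho^n\|^2 \leq Ch^4$ from \eqref{5.11}--\eqref{5.12}, together with $\|R_t^n\|^2 \leq C(\tau^4 + \tau^{2(2-\alpha)})$ from Lemma~\ref{lem3.4}, so that $\tau^{-\alpha}\zeta \leq C(h^4 + \tau^{2(2-\alpha)} + \tau^4)$. Since $\theta^0 = 0$ by the assumption $u_h^0 = P_h u_0$, Lemma~\ref{lem3.6} yields $a(\theta^n, I_h^*\theta^n) \leq C C_0^n(h^4 + \tau^{2(2-\alpha)} + \tau^4)$. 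The mesh condition $h \leq c_0\tau$ then lets me bound $C_0^n \leq C_0^{T/\tau}$, whose limit as $\tau \to 0$ is $e^{c_0 T\mu_5/\mu_6}$ by \eqref{4.15}; choosing $\tau_0$ small and applying Lemma~\ref{lem3.3} gives $\|\theta^n\|_1 \leq C e^{\frac{c_0 T\mu_5}{2\mu_6}}(h^2 + \tau^{2-\alpha})$ after taking square roots and absorbing $\tau^2 \leq \tau^{2-\alpha}$. Combining with $\|\rho^n\|_1 \leq Ch$ through the triangle inequality produces the claimed bound.
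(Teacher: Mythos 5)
Your proposal follows the paper's own proof almost line for line: testing \eqref{5.6} with $v_h=D_{\tau}^{\alpha}\theta^n$, splitting $a(\theta^n,I_h^*D_{\tau}^{\alpha}\theta^n)$ via Lemma \ref{lem3.7}, bounding the antisymmetric remainder through Lemmas \ref{lem3.2}--\ref{lem3.3} exactly as in \eqref{4.10}, applying Lemma \ref{lem3.6} with $C_0=\bigl(1+\tfrac{\mu_5}{2\mu_6}h\bigr)/\bigl(1-\tfrac{\mu_5}{2\mu_6}h\bigr)$, and concluding with $h\leq c_0\tau$, the limit \eqref{4.15}, Lemma \ref{lem3.3} and Lemma \ref{lem5.1}. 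So the route is the same as the paper's, and most steps are sound.

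There is, however, one genuine gap: you silently drop the reaction term. The error equation \eqref{5.6} carries $(q\theta^n,I_h^*v_h)$ on its left-hand side; with the choice $v_h=D_{\tau}^{\alpha}\theta^n$ this term has no useful sign, so it must be moved to the right and estimated by Young's inequality, which injects $C\|\theta^n\|^2$ into the forcing of your recursion. Your $\zeta=C\tau^{\alpha}\bigl(\|D_{\tau}^{\alpha}\rho^n\|^2+\|\rho^n\|^2+\|R_t^n\|^2\bigr)$ omits it, and the omission matters: Lemma \ref{lem3.6} needs the forcing to be a constant independent of the unknown sequence, so with $\|\theta^n\|^2$ present the recursion is not closed as you wrote it. The paper closes it by substituting the already-established $L^2$ bound \eqref{5.15} from Theorem \ref{thm5.1}, namely $\|\theta^n\|^2\leq C(h^4+\tau^{2(2-\alpha)}+\tau^4)$, into \eqref{5.20}; this dependence of Theorem \ref{thm5.2} on the conclusion of Theorem \ref{thm5.1} is an essential ingredient your write-up misses (your list of ``ingredients from Theorem \ref{thm5.1}'' includes only \eqref{5.11}, \eqref{5.12} and Lemma \ref{lem3.4}, not \eqref{5.15}). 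Alternatively, one could absorb $C\tau^{\alpha}\|\theta^n\|^2\leq \tfrac{C\tau^{\alpha}}{\mu_6}a(\theta^n,I_h^*\theta^n)$ into the left-hand side for $\tau\leq\tau_0$ sufficiently small, but some such step is required. With that single repair, your argument coincides with the paper's proof.
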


\begin{proof}
First, we take $v_h=D_{\tau}^{\alpha} \theta^n$ in \eqref{5.6} to obtain
\begin{equation}\label{5.16}
   \begin{split}
      &(D_{\tau}^{\alpha} \theta^n,I_h^*D_{\tau}^{\alpha} \theta^n)+a(\theta^n,I_h^*D_{\tau}^{\alpha} \theta^n)\\
      &\quad=-(D_{\tau}^{\alpha} \rho^n,I_h^*D_{\tau}^{\alpha} \theta^n)
      -(q\rho^n,I_h^*D_{\tau}^{\alpha} \theta^n)-(q\theta^n,I_h^*D_{\tau}^{\alpha} \theta^n)
      -(R_t^n(\bm{x}),I_h^*D_{\tau}^{\alpha} \theta^n).
   \end{split}
\end{equation}
Noting that
$(D_{\tau}^{\alpha} \theta^n,I_h^*D_{\tau}^{\alpha} \theta^n)\geq \mu_1\|D_{\tau}^{\alpha} \theta^n\|^2$,
applying Lemma \ref{lem3.1} and the Young inequality, we have
\begin{equation}\label{5.17}
   \begin{split}
      &\mu_1\|D_{\tau}^{\alpha} \theta^n\|^2+a(\theta^n,I_h^*D_{\tau}^{\alpha} \theta^n)\\
      &\quad\leq C(\|D_{\tau}^{\alpha} \rho^n\|^2+\|\rho^n\|^2+\|\theta^n\|^2+\|R_t^n(\bm{x})\|^2)+
      \frac{\mu_1}{2}\|D_{\tau}^{\alpha} \theta^n\|^2.
   \end{split}
\end{equation}
Apply Lemma \ref{lem3.7} in \eqref{5.17} to obtain
\begin{equation}\label{5.18}
    \begin{split}
       &\frac{\mu_1}{2}\|D_{\tau}^{\alpha} \theta^n\|^2
       +\frac{\tau^{-\alpha}}{2\Gamma(2-\alpha)}a(\theta^n,I_h^*\theta^n)
      +\frac{\tau^{-\alpha}}{2\Gamma(2-\alpha)}
      \sum_{k=0}^{n-1}\tilde{b}_k^n a(\theta^k,I_h^*\theta^k) \\
      &-\frac{\tau^{-\alpha}}{2\Gamma(2-\alpha)} \sum_{k=0}^{n-1}\tilde{b}_k^n a(\theta^n-\theta^k,I_h^*(\theta^n-\theta^k))\\
      &\quad\leq C(\|D_{\tau}^{\alpha} \rho^n\|^2+\|\rho^n\|^2+\|\theta^n\|^2+\|R_t^n(\bm{x})\|^2)\\
      &\quad\ \ \ -\frac{\tau^{-\alpha}}{2\Gamma(2-\alpha)}
      \sum_{k=0}^{n-1} \tilde{b}_k^n[a(\theta^n,I_h^*\theta^k)-a(\theta^k,I_h^*\theta^n)].
    \end{split}
\end{equation}
Making use of Lemma \ref{lem3.2} and Lemma \ref{lem3.3}, we have
\begin{align}\label{5.19}
   |a(\theta^n,I_h^*\theta^k)-a(\theta^k,I_h^*\theta^n)|\leq \mu_5 h \|\theta^n\|_1 \|\theta^k\|_1
   \leq \frac{\mu_5}{2\mu_6}h[a(\theta^n,I_h^*\theta^n)+ a(\theta^k,I_h^*\theta^k)].
\end{align}
Noting that $-\frac{\tau^{-\alpha}}{2\Gamma(2-\alpha)}\sum_{k=0}^{n-1}\tilde{b}_k^n a(\theta^n-\theta^k,I_h^*(\theta^n-\theta^k))\geq 0$,
and substituting \eqref{5.19} into \eqref{5.18}, we obtain
\begin{equation}\label{5.20}
    \begin{split}
       &\mu_1\tau^\alpha\Gamma(2-\alpha)\|D_{\tau}^{\alpha} \theta^n\|^2
       +(1-\frac{\mu_5}{2\mu_6}h)a(\theta^n,I_h^*\theta^n)\\
       &\quad\leq -(1+\frac{\mu_5}{2\mu_6}h)\sum_{k=0}^{n-1} \tilde{b}_k^n a(\theta^k,I_h^*\theta^k)\\
       &\quad\ \ \ +C\tau^\alpha\Gamma(2-\alpha)
       (\|D_{\tau}^{\alpha} \rho^n\|^2+\|\rho^n\|^2+\|\theta^n\|^2+\|R_t^n(\bm{x})\|^2).
   \end{split}
\end{equation}
Substituting \eqref{5.11}, \eqref{5.12} and \eqref{5.15} into \eqref{5.20}, and applying Lemma \ref{lem3.4},
we have
\begin{equation}\label{5.21}
    \begin{split}
       &\mu_1\tau^\alpha\Gamma(2-\alpha)\|D_{\tau}^{\alpha} \theta^n\|^2
       +(1-\frac{\mu_5}{2\mu_6}h)a(\theta^n,I_h^*\theta^n)\\
       &\quad\leq -(1+\frac{\mu_5}{2\mu_6}h)\sum_{k=0}^{n-1} \tilde{b}_k^n a(\theta^k,I_h^*\theta^k)
       +C\tau^\alpha (h^4+\tau^{2(2-\alpha)}+\tau^4).
   \end{split}
\end{equation}
Similar to the proof process of Theorem \ref{thm4.2}, when $h\leq h_1$, where $h_1=\min\{\frac{\mu_6}{\mu_5},1\}$, we have
\begin{equation}\label{5.22}
    \begin{split}
       a(\theta^n,I_h^*\theta^n)
       \leq -\frac{1+\frac{\mu_5}{2\mu_6}h}{1-\frac{\mu_5}{2\mu_6}h}\sum_{k=0}^{n-1} \tilde{b}_k^n a(\theta^k,I_h^*\theta^k)
       +C\tau^\alpha (h^4+\tau^{2(2-\alpha)}+\tau^4).
   \end{split}
\end{equation}
Applying Lemma \ref{lem3.6} in \eqref{5.22}, noting that $\theta^0=0$, we easily get
\begin{equation}\label{5.23}
    \begin{split}
       a(\theta^n,I_h^*\theta^n)
       \leq C\Big(\frac{1+\frac{\mu_5}{2\mu_6}h}{1-\frac{\mu_5}{2\mu_6}h}\Big)^n (h^4+\tau^{2(2-\alpha)}+\tau^4).
   \end{split}
\end{equation}
Let $c_0>0$ be a constant. Selecting $h$ and $\tau$ to satisfy $h\leq c_0 \tau$, we can obtain
\begin{equation}\label{5.24}
    \begin{split}
       a(\theta^n,I_h^*\theta^n)
       \leq C\Big(\frac{1+\frac{c_0\mu_5}{2\mu_6}\tau}{1-\frac{c_0\mu_5}{2\mu_6}\tau}\Big)^{\frac{T}{\tau}} (h^4+\tau^{2(2-\alpha)}+\tau^4).
   \end{split}
\end{equation}
By virtue of \eqref{4.15}, there exists a constant $\tau_0$ $(0<\tau_0<1)$, when $\tau\leq\tau_0$, it follows that
\begin{equation}\label{5.26}
    \begin{split}
       a(\theta^n,I_h^*\theta^n)
       \leq Ce^{\frac{c_0T\mu_5}{\mu_6}}(h^4+\tau^{2(2-\alpha)}+\tau^4).
   \end{split}
\end{equation}
Finally, apply Lemma \ref{lem3.3} and Lemma \ref{lem5.1} with \eqref{5.26} to complete the proof.
\end{proof}

\begin{remark}\label{rmk5.2}
   From Theorem \ref{thm5.2},
we can see that the optimal \textit{a priori} error estimate for $\|u(t_n)-u_h^n\|_1$ is obtained
when $h$ and $\tau$ satisfy $h\leq c_0\tau\leq c_0\tau_0$.
When $h$ is sufficiently small, we have $\|u(t_n)-u_h^n\|_1\leq C(\tau^{2-\alpha})$, which is validated in the Example 1 in Section \ref{sec6}.
\end{remark}
\begin{remark}\label{rmk5.3}
   Similar to Remark \ref{rmk4.1},
   When the coefficient $\mathcal{A}(\bm{x})$ is a symmetry and positive definite constant matrix,
   in the analysis and results of Theorem \ref{5.2}, we can remove the conditions $h\leq c_0\tau\leq c_0\tau_0$ and $h\leq h_1$.
\end{remark}

\section{Numerical Examples}\label{sec6}

In this section, we will give two examples to examine the feasibility and effectiveness of the proposed FVE scheme.

{\bf Example 1.}\ \  We consider the equations \eqref{1.1} in one-dimensional space regions as follows
\begin{align}\label{6.1}
   \left\{
   \begin{array}{ll}
        {}_0^CD_t^{\alpha}u(x,t)-\frac{\partial}{\partial x} (\tilde{a}(x)\frac{\partial u(x,t)}{\partial x})+q(x)u(x,t)
        =f(x,t),& (x,t)\in \Omega\times J,\\
        u(a,t)=u(b,t)=0, &  t\in \bar{J}, \\
        u(x,0)=u_0(x),& x\in \bar{\Omega}.
   \end{array}\right.
\end{align}
where $\Omega=(a,b)\subset R^1$, $J=(0,T]$ is the time interval with $0<T<\infty$.
The functions $\tilde{a}(x)$, $q(x)$, $f(x,t)$ and $u_0(x)$
are smooth enough, and $q(x)\geq 0$, $\forall x\in \bar{\Omega}$.
Suppose that there exist two constants $\tilde{a}_0$ and $\tilde{a}_1$ such that
$0<\tilde{a}_0\leq \tilde{a}(x)\leq \tilde{a}_1$.
Following Reference \cite{Li-Chen-Wu}, we construct the primal mesh $\mathcal{\widetilde{T}}_h$
and dual mesh $\mathcal{\widetilde{T}}_h^*$,
and take the spaces $\tilde{V}_h$ and $\tilde{V}_h^*$ as the \textit{trial} function space and \textit{test} function space, respectively, where
\begin{align*}
     &\tilde{V}_h=\{w_h\in H_0^1(\Omega): w_h\in P_1(A),\ \forall A\in \mathcal{\widetilde{T}}_h\},\\
     &\tilde{V}_h^*=\{v_h\in L^2(\Omega): v_h|_{A^*}\in P_0(A^*),\ \forall A^*\in \mathcal{\widetilde{T}}_h^*,\ v_h|_{\partial\Omega}=0\}.
\end{align*}
As in Reference \cite{Li-Chen-Wu}, we also define operator $\tilde{I}_h^*: C(\Omega)\rightarrow \tilde{V}_h^*$ by
\begin{align*}
     \tilde{I}_h^*w_h=\sum_{i=1}^{\tilde{M}_T^0}w_h(x_i)\chi_{A_i^*},\ \forall w_h\in \tilde{V}_h^*,
\end{align*}
where $\tilde{M}_T^0$ is the number of the interior nodes,
and $\chi_{A_i^*}$ is the characteristic function of a set $A_i^*$ with $A_i^*\in \mathcal{\widetilde{T}}_h^*$.
Making use of the operator $\tilde{I}_h^*$,
we can also present the FVE scheme and obtain the corresponding theoretical results as Theorems \ref{thm4.1}-\ref{thm4.2} and Theorems \ref{thm5.1}-\ref{thm5.2}.
Here, we will not repeat these processes.
\begin{table}
\caption{Error results for $\max\limits_{n}\|u(t_n)-u_h^n\|$ with $h=2.5E-04$ in Example 1.}
\centering
{\begin{tabular}{lllll} \toprule
  $\alpha$ & $\tau_1=1/10$ & $\tau_2=1/20$ & $\tau_3=1/40$ & $\tau_4=1/80$\\
 \midrule
 0.1  & 1.37013229E-05 & 3.93247726E-06 & 1.06811885E-06 & 2.86820027E-07\\
 Rate &                & 1.80080487     & 1.88036624     & 1.89685451\\
 0.3  & 6.53433449E-05 & 2.09751531E-05 & 6.60479132E-06 & 2.03573159E-06\\
 Rate &                & 1.63935897     & 1.66709645     & 1.69796563\\
 0.5  & 1.77931804E-04 & 6.41016920E-05 & 2.28988168E-05 & 8.10013858E-06\\
 Rate &                & 1.47289006     & 1.48508938     & 1.49925456\\
 0.7  & 4.16102111E-04 & 1.70098563E-04 & 6.93000954E-05 & 2.81419417E-05\\
 Rate &                & 1.29056665     & 1.29544171     & 1.30013547\\
 0.9  & 9.03244861E-04 & 4.21869751E-04 & 1.96878771E-04 & 9.18133390E-05\\
 Rate &                & 1.09831949     & 1.09949009     & 1.10053188\\
 \bottomrule
\end{tabular}}
\label{tab1}
\end{table}
\begin{table}
\caption{Error results for $\max\limits_{n}\|u(t_n)-u_h^n\|_1$ with $h=2.5E-04$ in Example 1.}
\centering
{\begin{tabular}{lllll} \toprule
$\alpha$ & $\tau_1=1/10$ & $\tau_2=1/20$ & $\tau_3=1/40$ & $\tau_4=1/80$ \\
 \midrule
 0.1  & 8.40509300E-05 & 2.46792171E-05 & 7.15127494E-06 & 2.05289204E-06\\
 Rate &                & 1.76796716     & 1.78702425     & 1.80054271\\
 0.3  & 3.97616219E-04 & 1.28155606E-04 & 4.08865782E-05 & 1.29439996E-05\\
 Rate &                & 1.63348002     & 1.64819735     & 1.65934386\\
 0.5  & 1.08020056E-03 & 3.89615121E-04 & 1.39673166E-04 & 4.98620848E-05\\
 Rate &                & 1.47117763     & 1.47999479     & 1.48603977\\
 0.7  & 2.52162418E-03 & 1.03119069E-03 & 4.20560268E-04 & 1.71241407E-04\\
 Rate &                & 1.29004214     & 1.29392668     & 1.29628096\\
 0.9  & 5.46439531E-03 & 2.55250980E-03 & 1.19159979E-03 & 5.56103741E-04\\
 Rate &                & 1.09814536     & 1.09901672     & 1.09947383\\
 \bottomrule
\end{tabular}}
\label{tab2}
\end{table}

In this example, we take $\Omega=(0,1)$, $T=1$, $\tilde{a}(x)=1+2x^2$, $q(x)=1+x^2$, $u_0(x)=0$
and
$$f(x,t)=(\frac{2}{\Gamma(3-\alpha)}t^{2-\alpha}+t^2(1+x^2)+4\pi^2t^2(1+2x^2))\sin(2\pi x)-8\pi t^2x\cos(2\pi x).$$
Thus we can obtain the analytical solution $u(x,t)=t^2\sin(2\pi x)$.

We give the numerical results with some different parameters $\alpha=0.1$, $0.3$, $0.5$, $0.7$, $0.9$ in Tables \ref{tab1}-\ref{tab4}.
In order to test the time convergence rates,
we choose the spatial step length $h=2.5E-04$ and the time step length $\tau=1/10, 1/20, 1/40, 1/80$,
and give the error behaviors for $u$ with $L^2(\Omega)$-norm (in Table \ref{tab1})
and $H^1(\Omega)$-norm (in Table \ref{tab2}).
We can find that the time convergence rates in these two norms are approximate to $2-\alpha$,
which are consistent with the convergence results in Theorems \ref{thm5.1}-\ref{thm5.2}.
Moreover, choosing different parameter $\alpha=0.1, 0.3, 0.5, 0.7, 0.9$, fixing the time step length $\tau=1E-03$,
regardless of the condition $h\leq c_0\tau$ in Theorem \ref{thm5.2},
we take the spatial step length $h=1/10, 1/20, 1/40, 1/80$,
and give error behaviors in Tables \ref{tab3}-\ref{tab4} to test the spatial convergence rates,
and find that the spatial convergence rates for $u$ with $L^2(\Omega)$-norm
and $H^1(\Omega)$-norm are approximate to $2$.
This means that the convergence can still be satisfied in the actual calculation,
even if the condition $h\leq c_0\tau$ in Theorem \ref{thm5.2} is not established.
\begin{table}
\caption{Error results for $\max\limits_{n}\|u(t_n)-u_h^n\|$ with $\tau=1E-03$ in Example 1.}
\centering
{\begin{tabular}{lllll} \toprule
$\alpha$ & $h_1=1/10$ & $h_2=1/20$ & $h_3=1/40$ & $h_4=1/80$\\
 \midrule
 0.1  & 2.30102485E-02 & 5.71793833E-03 & 1.42733291E-03 & 3.56696835E-04\\
 Rate &                & 2.00870960     & 2.00217319     & 2.00055155\\
 0.3  & 2.29764033E-02 & 5.70898201E-03 & 1.42504516E-03 & 3.56104256E-04\\
 Rate &                & 2.00884756     & 2.00222587     & 2.00063606\\
 0.5  & 2.29407203E-02 & 5.69945327E-03 & 1.42252431E-03 & 3.55364857E-04\\
 Rate &                & 2.00901525     & 2.00237022     & 2.00108039\\
 0.7  & 2.29039730E-02 & 5.68916680E-03 & 1.41932930E-03 & 3.53972818E-04\\
 Rate &                & 2.00930859     & 2.00300804     & 2.00349886\\
 0.9  & 2.28652818E-02 & 5.67586448E-03 & 1.41283646E-03 & 3.49217385E-04\\
 Rate &                & 2.01024665     & 2.00624566     & 2.01639719\\
 \bottomrule
\end{tabular}}
\label{tab3}
\end{table}
\begin{table}
\caption{Error results for $\max\limits_{n}\|u(t_n)-u_h^n\|_1$ with $\tau=1E-03$ in Example 1.}
\centering
{\begin{tabular}{lllll} \toprule
$\alpha$ & $h_1=1/10$ & $h_2=1/20$ & $h_3=1/40$ & $h_4=1/80$\\
 \midrule
 0.1  & 2.31685939E-02 & 5.76008882E-03 & 1.43803366E-03 & 3.59383963E-04\\
 Rate &                & 2.00800753     & 2.00199361     & 2.00049951\\
 0.3  & 2.31628844E-02 & 5.75865046E-03 & 1.43767150E-03 & 3.59291324E-04\\
 Rate &                & 2.00801226     & 2.00199669     & 2.00050806\\
 0.5  & 2.31594188E-02 & 5.75781483E-03 & 1.43746182E-03 & 3.59237288E-04\\
 Rate &                & 2.00800575     & 2.00199776     & 2.00051463\\
 0.7  & 2.31586206E-02 & 5.75772620E-03 & 1.43748501E-03 & 3.59322919E-04\\
 Rate &                & 2.00797823     & 2.00195227     & 2.00019405\\
 0.9  & 2.31608778E-02 & 5.75888309E-03 & 1.43853377E-03 & 3.61460058E-04\\
 Rate &                & 2.00782899     & 2.00118994     & 1.99269095\\
 \bottomrule
\end{tabular}}
\label{tab4}
\end{table}
\begin{table}
\caption{Error results for $\max\limits_{n}\|u(t_n)-u_h^n\|$ in Example 2 with $\alpha=0.1$.}
\centering
{\begin{tabular}{lllll} \toprule
& $s_1=1/5$ & $s_2=1/10$ & $s_3=1/20$ & $s_4=1/40$\\
 \midrule
 $\frac{h}{\sqrt{2}}=\tau=s$  & 1.60067189E-01 & 3.71949729E-02 & 9.19333755E-03 & 2.28450645E-03 \\
 Rate                         &                & 2.10549806     & 2.01644703     & 2.00870620 \\
 $\frac{h}{\sqrt{2}}=2\tau=s$ & 1.60069253E-01 & 3.71962047E-02 & 9.19375623E-03 & 2.28463221E-03 \\
 Rate                         &                & 2.10546888     & 2.01642910     & 2.00869248\\
 $\frac{h}{\sqrt{2}}=4\tau=s$ & 1.60069873E-01 & 3.71965683E-02 & 9.19387809E-03 & 2.28466840E-03\\
 Rate                         &                & 2.10546037     & 2.01642408     & 2.00868875\\
 $\tau=\frac{2h}{\sqrt{2}}=s$ & 3.71908773E-02 & 9.19191943E-03 & 2.28407438E-03 & 5.69390117E-04\\
 Rate                         &                & 2.01651072     & 2.00875652     & 2.00412027\\
 $\tau=\frac{4h}{\sqrt{2}}=s$ & 9.18720496E-03 & 2.28261124E-03 & 5.68955265E-04 & 1.42028688E-04\\
 Rate                         &                & 2.00894085     & 2.00429804     & 2.00213285\\
 \bottomrule
\end{tabular}}
\label{tab5}
\end{table}
\begin{table}
\caption{Error results for $\max\limits_{n}\|u(t_n)-u_h^n\|_1$ in Example 2 with $\alpha=0.1$.}
\centering
{\begin{tabular}{lllll} \toprule
& $s_1=1/5$ & $s_2=1/10$ & $s_3=1/20$ & $s_4=1/40$\\
 \midrule
 $\frac{h}{\sqrt{2}}=\tau=s$  & 2.47401540E+00 & 1.31270541E+00 & 6.65179344E-01 & 3.33975473E-01 \\
 Rate                         &                & 0.91431129     & 0.98072792     & 0.99400121 \\
 $\frac{h}{\sqrt{2}}=2\tau=s$ & 2.47400989E+00 & 1.31270378E+00 & 6.65179079E-01 & 3.33975433E-01\\
 Rate                         &                & 0.91430986     & 0.98072671     & 0.99400081\\
 $\frac{h}{\sqrt{2}}=4\tau=s$ & 2.47400823E+00 & 1.31270331E+00 & 6.65179002E-01 & 3.33975421E-01\\
 Rate                         &                & 0.91430942     & 0.98072635     & 0.99400069\\
 $\tau=\frac{2h}{\sqrt{2}}=s$ & 1.31271081E+00 & 6.65180241E-01 & 3.33975611E-01 & 1.67229229E-01\\
 Rate                         &                & 0.98073191     & 0.99400256     & 0.99791572\\
 $\tau=\frac{4h}{\sqrt{2}}=s$ & 6.65183228E-01 & 3.33976078E-01 & 1.67229299E-01 & 8.36617901E-02\\
 Rate                         &                & 0.99400702     & 0.99791714     & 0.99918686\\
 \bottomrule
\end{tabular}}
\label{tab6}
\end{table}
\begin{table}
\caption{Error results for $\max\limits_{n}\|u(t_n)-u_h^n\|$ in Example 2 with $\alpha=0.5$.}
\centering
{\begin{tabular}{lllll} \toprule
& $s_1=1/5$ & $s_2=1/10$ & $s_3=1/20$ & $s_4=1/40$\\
 \midrule
 $\frac{h}{\sqrt{2}}=\tau=s$  & 1.59933131E-01 & 3.71319625E-02 & 9.17277091E-03 & 2.27822823E-03 \\
 Rate                         &                & 2.10673536     & 2.01723205     & 2.00944532 \\
 $\frac{h}{\sqrt{2}}=2\tau=s$ & 1.59952720E-01 & 3.71461868E-02 & 9.17869712E-03 & 2.28042518E-03\\
 Rate                         &                & 2.10635950     & 2.01685283     & 2.00898655\\
 $\frac{h}{\sqrt{2}}=4\tau=s$ & 1.59959883E-01 & 3.71513345E-02 & 9.18082686E-03 & 2.28121103E-03\\
 Rate                         &                & 2.10622419     & 2.01671804     & 2.00882418\\
 $\tau=\frac{2h}{\sqrt{2}}=s$ & 3.70930838E-02 & 9.15640676E-03 & 2.27212125E-03 & 5.65372956E-04\\
 Rate                         &                & 2.01829676     & 2.01074173     & 2.00676504\\
 $\tau=\frac{4h}{\sqrt{2}}=s$ & 9.11176525E-03 & 2.25530613E-03 & 5.59264054E-04 & 1.38601506E-04\\
 Rate                         &                & 2.01440731     & 2.01172176     & 2.01258668\\
 \bottomrule
\end{tabular}}
\label{tab7}
\end{table}
\begin{table}
\caption{Error results for $\max\limits_{n}\|u(t_n)-u_h^n\|_1$ in Example 2 with $\alpha=0.5$.}
\centering
{\begin{tabular}{lllll} \toprule
& $s_1=1/5$ & $s_2=1/10$ & $s_3=1/20$ & $s_4=1/40$\\
 \midrule
 $\frac{h}{\sqrt{2}}=\tau=s$  & 2.47419989E+00 & 1.31277495E+00 & 6.65190985E-01 & 3.33977313E-01 \\
 Rate                         &                & 0.91434245     & 0.98077909     & 0.99401851 \\
 $\frac{h}{\sqrt{2}}=2\tau=s$ & 2.47414721E+00 & 1.31275608E+00 & 6.65187210E-01 & 3.33976607E-01\\
 Rate                         &                & 0.91433247     & 0.98076654     & 0.99401337\\
 $\frac{h}{\sqrt{2}}=4\tau=s$ & 2.47412796E+00 & 1.31274925E+00 & 6.65185856E-01 & 3.33976355E-01\\
 Rate                         &                & 0.91432874     & 0.98076198     & 0.99401153\\
 $\tau=\frac{2h}{\sqrt{2}}=s$ & 1.31282670E+00 & 6.65201458E-01 & 3.33979287E-01 & 1.67229862E-01\\
 Rate                         &                & 0.98081325     & 0.99403270     & 0.99792614\\
 $\tau=\frac{4h}{\sqrt{2}}=s$ & 6.65230396E-01 & 3.33984826E-01 & 1.67230889E-01 & 8.36620778E-02\\
 Rate                         &                & 0.99407153     & 0.99794120     & 0.99919562\\
 \bottomrule
\end{tabular}}
\label{tab8}
\end{table}
\begin{table}
\caption{Error results for $\max\limits_{n}\|u(t_n)-u_h^n\|$ in Example 2 with $\alpha=0.9$.}
\centering
{\begin{tabular}{lllll} \toprule
& $s_1=1/5$ & $s_2=1/10$ & $s_3=1/20$ & $s_4=1/40$\\
 \midrule
 $\frac{h}{\sqrt{2}}=\tau=s$  & 1.59738070E-01 & 3.70003648E-02 & 9.11025687E-03 & 2.24984335E-03 \\
 Rate                         &                & 2.11009679     & 2.02197586     & 2.01766718\\
 $\frac{h}{\sqrt{2}}=2\tau=s$ & 1.59801796E-01 & 3.70596396E-02 & 9.14211637E-03 & 2.26514851E-03\\
 Rate                         &                & 2.10836287     & 2.01924876     & 2.01292254\\
 $\frac{h}{\sqrt{2}}=4\tau=s$ & 1.59831642E-01 & 3.70873701E-02 & 9.15702243E-03 & 2.27231901E-03\\
 Rate                         &                & 2.10755318     & 2.01797751     & 2.01071316\\
 $\tau=\frac{2h}{\sqrt{2}}=s$ & 3.68740143E-02 & 9.04237452E-03 & 2.21733715E-03 & 5.39527731E-04\\
 Rate                         &                & 2.02783091     & 2.02787352     & 2.03905913\\
 $\tau=\frac{4h}{\sqrt{2}}=s$ & 8.89870930E-03 & 2.14908756E-03 & 5.08535496E-04 & 1.15435262E-04\\
 Rate                         &                & 2.04987183     & 2.07930388     & 2.13926449\\
 \bottomrule
\end{tabular}}
\label{tab9}
\end{table}
\begin{table}
\caption{Error results for $\max\limits_{n}\|u(t_n)-u_h^n\|_1$ in Example 2 with $\alpha=0.9$.}
\centering
{\begin{tabular}{lllll} \toprule
& $s_1=1/5$ & $s_2=1/10$ & $s_3=1/20$ & $s_4=1/40$\\
 \midrule
 $\frac{h}{\sqrt{2}}=\tau=s$  & 2.47455235E+00 & 1.31293700E+00 & 6.65229920E-01 & 3.33986479E-01 \\
 Rate                         &                & 0.91436987     & 0.98087273     & 0.99406336\\
 $\frac{h}{\sqrt{2}}=2\tau=s$ & 2.47437911E+00 & 1.31285736E+00 & 6.65209212E-01 & 3.33981391E-01\\
 Rate                         &                & 0.91435638     & 0.98083013     & 0.99404043\\
 $\frac{h}{\sqrt{2}}=4\tau=s$ & 2.47429826E+00 & 1.31282030E+00 & 6.65199619E-01 & 3.33979049E-01\\
 Rate                         &                & 0.91434996     & 0.98081021     & 0.99402974\\
 $\tau=\frac{2h}{\sqrt{2}}=s$ & 1.31310869E+00 & 6.65274972E-01 & 3.33997708E-01 & 1.67234468E-01\\
 Rate                         &                & 0.98096368     & 0.99411256     & 0.99796598\\
 $\tau=\frac{4h}{\sqrt{2}}=s$ & 6.65374613E-01 & 3.34023254E-01 & 1.67241104E-01 & 8.36648725E-02\\
 Rate                         &                & 0.99421828     & 0.99801907     & 0.99923555\\
 \bottomrule
\end{tabular}}
\label{tab10}
\end{table}

{\bf Example 2.}\ \
In this example, we consider the equations \eqref{1.1} in two-dimensional space regions,
and choose $\Omega=(0,1)\times(0,1)$, $J=(0,1]$, $q(\bm{x})=1+x_1^2+x_2^2$, $\forall\bm{x}=(x_1,x_2)\in \Omega$,
and
\begin{align*}
   \mathcal{A}(\bm{x})=
   \left(
   \begin{array}{cc}
     2+x_1^2+x_2^2 & x_1^2+x_2^2 \\
     x_1^2+x_2^2 & 2+x_1^2+x_2^2
   \end{array}
   \right),\ \forall \bm{x}=(x_1,x_2)\in \Omega.
\end{align*}
We choose the analytical solution $u(\bm{x},t)=t^2\sin(2\pi x_1)\sin(2\pi x_2)$, then we can get the corresponding initial function $u_0(\bm{x})$ and the source function $f(\bm{x},t)$.

We select some different parameters $\alpha$ and mesh sizes to carry out numerical simulation.
and give some error results with $\alpha=0.1, 0.5, 0.9$ for $u$
in $L^2(\Omega)$-norm and $H^1(\Omega)$-norm in Tables \ref{tab5}-\ref{tab10},
where the mesh sizes are selected as $h=\sqrt{2}\tau, 2\sqrt{2}\tau, 4\sqrt{2}\tau, \frac{\sqrt{2}}{2}\tau, \frac{\sqrt{2}}{4}\tau$.
The error results show that the convergence rates for $u$ in $L^2(\Omega)$-norm are approximate to 2,
and the convergence rates for $u$ in $H^1(\Omega)$-norm are approximate to 1.
Moreover, we also ignore the condition $h\leq c_0\tau$ in Theorem \ref{thm5.2},
fix the time step length $\tau=1E-03$, select the spatial step length $h=\sqrt{2}/10, \sqrt{2}/20, \sqrt{2}/40, \sqrt{2}/80$,
and give error behaviors in Tables \ref{tab11}-\ref{tab12},
in which the convergence rates still satisfy the theoretical results.
From these numerical results in this example,
we can see that the proposed FVE method
for the time fractional reaction-diffusion equations with the Caputo fractional derivative in two-dimensional space regions is feasible and effective.

\begin{table}
\caption{Error results for $\max\limits_{n}\|u(t_n)-u_h^n\|$ with $\tau=1E-03$ in Example 2.}
\centering
{\begin{tabular}{lllll} \toprule
$\alpha$ & $h_1=\sqrt{2}/10$ & $h_2=\sqrt{2}/20$ & $h_3=\sqrt{2}/40$ & $h_4=\sqrt{2}/80$\\
 \midrule
 0.1  & 1.60070131E-01 & 3.71967163E-02 & 9.19392661E-03 & 2.28468228E-03\\
 Rate &                & 2.10545696     & 2.01642221     & 2.00868760\\
 0.3  & 1.60018261E-01 & 3.71759572E-02 & 9.18810334E-03 & 2.28319506E-03\\
 Rate &                & 2.10579476     & 2.01653090     & 2.00871296\\
 0.5  & 1.59963911E-01 & 3.71541871E-02 & 9.18198205E-03 & 2.28161733E-03\\
 Rate &                & 2.10614975     & 2.01664729     & 2.00874877\\
 0.7  & 1.59909339E-01 & 3.71322390E-02 & 9.17573371E-03 & 2.27992781E-03\\
 Rate &                & 2.10650999     & 2.01677688     & 2.00883538\\
 0.9  & 1.59857457E-01 & 3.71109600E-02 & 9.16926891E-03 & 2.27776720E-03\\
 Rate &                & 2.10686882     & 2.01696671     & 2.00918640\\
 \bottomrule
\end{tabular}}
\label{tab11}
\end{table}
\begin{table}
\caption{Error results for $\max\limits_{n}\|u(t_n)-u_h^n\|_1$ with $\tau=1E-03$ in Example 2.}
\centering
{\begin{tabular}{lllll} \toprule
$\alpha$ & $h_1=\sqrt{2}/10$ & $h_2=\sqrt{2}/20$ & $h_3=\sqrt{2}/40$ & $h_4=\sqrt{2}/80$\\
 \midrule
 0.1  & 2.47400754E+00 & 1.31270311E+00 & 6.65178971E-01 & 3.33975417E-01\\
 Rate &                & 0.91430923     & 0.98072620     & 0.99400065\\
 0.3  & 2.47406090E+00 & 1.31272376E+00 & 6.65181964E-01 & 3.33975808E-01\\
 Rate &                & 0.91431766     & 0.98074240     & 0.99400545\\
 0.5  & 2.47411714E+00 & 1.31274548E+00 & 6.65185122E-01 & 3.33976225E-01\\
 Rate &                & 0.91432659     & 0.98075942     & 0.99401050\\
 0.7  & 2.47417394E+00 & 1.31276744E+00 & 6.65188363E-01 & 3.33976679E-01\\
 Rate &                & 0.91433557     & 0.98077652     & 0.99401557\\
 0.9  & 2.47422847E+00 & 1.31278888E+00 & 6.65191782E-01 & 3.33977288E-01\\
 Rate &                & 0.91434381     & 0.98079267     & 0.99402035\\
 \bottomrule
\end{tabular}}
\label{tab12}
\end{table}

\section{Conclusions}

We apply the FVE methods based on the $L1$-formula
to solve the time fractional reaction-diffusion equations with the Caputo fractional derivative.
We construct the fully discrete FVE scheme,
give the existence and uniqueness analysis,
and derive the stability results which are only depend on the initial data $u_0(\bm{x})$ and the source term function $f(\bm{x},t)$,
where the stability result with $H^1(\Omega)$-norm need to satisfy the condition $h\leq c_0\tau\leq c_0\tau_0$ and $h\leq h_1$.
We also obtain the optimal \textit{a priori} error estimates in $L^2(\Omega)$-norm and $H^1(\Omega)$-norm
by using the properties of the operator $I_h^*$ and some important lemmas.
Moreover, we give two numerical examples, and find that the convergence can still be satisfied in the actual calculation,
even if the condition $h\leq c_0\tau$ is not established.

%

\section*{Funding}

This work is supported by the National Natural Science Foundation of China (11701299, 11761053, 11661058),
the Natural Science Foundation of Inner Mongolia Autonomous Region (2017MS0107),
the Program for Young Talents of Science and Technology in Universities of Inner Mongolia Autonomous Region (NJYT-17-A07),
and the Prairie Talent Project of Inner Mongolia Autonomous Region.

\end{document}